\newtheorem{thm}{Theorem}[section]
\newtheorem{corollary}[thm]{Corollary}
\newtheorem{lemma}[thm]{Lemma}
\newtheorem{proposition}[thm]{Proposition}
\newtheorem*{properties}{Properties}
\theoremstyle{remark}
\newtheorem{remark}[thm]{Remark}
\DeclareMathOperator{\Ext}{Ext}
\DeclareMathOperator{\depth}{depth}
\DeclareMathOperator{\Spec}{Spec}
\DeclareMathOperator{\Tor}{Tor}
\newcommand{\codim}{\operatorname{codim}}
\newcommand{\codepth}{\operatorname{codepth}}
\newcommand{\dd}{\operatorname{d}}
\newcommand{\idd}{\operatorname{id}}
\newcommand{\embdim}{\operatorname{embdim}}
\begin{document}

\title[Tensor product of algebras over a ring]{Cohen-Macaulay, Gorenstein, Complete intersection and regular defect for the tensor product of algebras}

\author[S. Bouchiba]{S. Bouchiba $^{(\star)}$}
\address{Department of Mathematics, Faculty of Sciences, University Moulay Ismail, Meknes, Morocco}
\email{bouchibasamir@gmail.com}

\author[J. Conde-Lago]{J. Conde-Lago}
\address{Departamento de \'Alxebra, Facultade de Matem\'aticas, Universidade de Santiago de Compostela, E-15782 Santiago de Compostela, Spain}
\email{jesus.conde@usc.es}

\thanks{$^{(\star)}$ Supported by KFUPM under DSR Research Grant \# RG1212.}

\author[J. Majadas]{J. Majadas}
\address{Departamento de \'Algebra, Facultad de Matem\'aticas, Universidad de Santiago de Compostela, E-15782 Santiago de Compostela, Spain}
\email{j.majadas@usc.es}

\date{\today}

\subjclass[2010]{13H10, 13C15, 13H05, 13C14, 13D03}
\keywords{Tensor product of algebras, Cohen-Macaulay ring, Gorenstein ring, complete intersection ring, regular ring, Krull dimension, depth, embedding dimension}

\begin{abstract}
This paper main goal is to measure the defect of Cohen-Macaulayness,
Gorensteiness, complete intersection and regularity for the tensor
product of algebras over a ring. For this sake, we determine the
homological invariants which are inherent to these notions, such as
the Krull dimension, depth, injective dimension, type and embedding
dimension of the tensor product constructions in terms of those of
their components. Our results allow to generalize various theorems
in this topic especially \cite[Theorem 2.1]{BK1}, \cite[Theorem 6]{TY}
and \cite[Theorems 1 and 2]{M}
as well as two Grothendieck's theorems on the transfer of Cohen-Macaulayness
and regularity to tensor products over a field
issued from finite field extensions. To prove our theorems on the
defect of complete intersection and regularity, the homology theory
introduced by Andr\'e and Quillen for commutative rings turns out to
be an adequate and efficient tool in this respect.
\end{abstract}
\maketitle


\section{Introduction}

All rings of this paper are assumed to be unitary and Noetherian and
all ring homomorphisms are unital. In particular, the tensor product
$A\otimes_RB$ of two Noetherian algebras $A$ and $B$ over a
Noetherian ring $R$ is supposed to be Noetherian.

In \cite{BK1}, the authors were concerned with the problem of the
transfer of the Cohen-Macaulayness property to the tensor product of
algebras over a field. Its main theorem proves that the tensor
product $A\otimes_kB$ of two algebras $A$ and $B$ over a field $k$,
assuming Noetherianity of $A\otimes_kB$, is Cohen-Macaulay if and
only if so are $A$ and $B$. This theorem extends a result of
Grothendieck in this regard when the components of the considered
tensor product are two field extensions one of which is finitely
generated over $k$. The transfer of the Gorenstein and complete intersection
properties to these tensor product constructions was 
investigated by Watanabe, Ishikawa, Tachibana and Otsuka in \cite{WITO}
and subsequently by Tousi and Yassemi in \cite{TY}, proving similar
theorems for these two notions as the above one on Cohen-Macaulayness.

The regularity property still resists to any effort of improvement
at the level enjoyed by the above three notions. Indeed, contrary to
the above notions, a Noetherian tensor product of two field
extensions of a field $k$ is not regular in general. In 1965,
Grothendieck proved that $K\otimes_kL$ is a regular ring provided
$K$ or $L$ is a finitely generated separable extension field of $k$
\cite[Seconde Partie, Lemma 6.7.4.1]{EGA4} and the finiteness hypotheses
were dropped in \cite[Note]{Sh1}. In 1969, Watanabe, Ishikawa, Tachibana,
and Otsuka, showed that under a suitable condition tensor products
of regular rings are complete intersections \cite[Theorem 2, p.
417]{WITO} and subsequently Tousi and Yassemi proved that a
Noetherian tensor product of two $k$-algebras $A$ and $B$ is regular
if and only if so are $A$ and $B$ in the special case where $k$ is
perfect. In this context, the main theorem of \cite{BK2} establishes
necessary and sufficient conditions for a Noetherian tensor product
of two extension fields of $k$ to inherit regularity and hence
generalizes Grothendieck's aforementioned result. Recently,
a more general result (over a base ring) was obtained in \cite{M}.

In the stream of the above achievements, we aim in this paper to go
farther in this direction by measuring the defect of
Cohen-Macaulayness, Gorensteiness, complete intersection and
regularity for the tensor product of algebras over a ring $R$. To
this purpose, it is clearly essential to determine the very
invariants which are inherent to these notions such as
the Krull dimension, depth, injective dimension, type and embedding
dimension of the tensor product constructions in terms of those of
their components. That is our objective in the most part of this
paper which allows us to evaluate the defect of the above notions
for the tensor products under some flatness condition. Our results
allow to generalize the above theorems in \cite{BK1}, \cite{BK2}, \cite{M},
\cite{TY}, \cite{WITO}. In the last two sections of the paper 
we use the homology theory of Andr\'e and Quillen.
For the convenience of the reader, we include a whole
section to give a brief introduction to such homology, including the results
used in this paper.\\

\noindent {\bf Acknowledgement}\\
We are indebted to S. Kabbaj for his many contributions to this paper.

\section{Cohen-Macaulay rings}
This section main goal is to determine the Cohen-Macaulayness defect
of the tensor product $A\otimes_RB$ of algebras $A$ and $B$ over an
arbitrary ring $R$ in local and global cases. Also, we generalize
\cite[Theorem 2.1]{BK1} which characterizes the Cohen-Macaulayness of
the tensor product $A\otimes_kB$ of algebras $A$ and $B$ over a
field $k$ in terms of the Cohen-Macaulayness of $A$ and $B$.

Let $R$ be a ring and let $A$ and $B$ be $R$-algebras. First, it is
worth noting that the tensor product $A\otimes_RB$ over $R$ might be
trivial even if $A$ and $B$ are not so. Of course, the interesting
case is when $A\otimes_RB\neq \{0\}$ which makes it legitimate to
seek conditions on the underlying components $R$, $A$ and $B$ of the
tensor product which guarantee $A\otimes_RB\neq \{0\}$. That is the
purpose of our first result.

For a given $R$-algebra $A$, denote by $f_A:R\longrightarrow A$,
with $f_A(r)=r\cdot1_A$ for any $r\in R$ and where $1_A$ is the unit
element of $A$, the ring homomorphism defining the structure of
algebra of $A$ over $R$. Also, if $A$ and $B$ are $R$-algebras, we
denote by $\mu_A:A\longrightarrow A\otimes_RB$ and
$\mu_B:B\longrightarrow A\otimes_RB$ the canonical algebra
homomorphisms over $A$ and $B$, respectively, such that
$\mu_A(a)=a\otimes_R1$ and $\mu_B(b)=1\otimes_Rb$ for each $a\in A$
and each $b\in B$. Observe that the following diagram is
commutative:

$$\begin{array}{ccccc}
&&A&&\\
&\stackrel {f_A}\nearrow &&\stackrel {\mu_A}\searrow&\\
R&&\stackrel {f_{A\otimes_RB}}\longrightarrow&&A\otimes_RB\\
&\stackrel {f_B}\searrow&&\stackrel {\mu_B}\nearrow&\\
&&B&&
\end{array}$$

\noindent Throughout this section, given ideals $I$, $J$ and $H$ of
$A$, $B$ and $A\otimes_RB$, respectively, we adopt the following
notation for easiness: $I\cap R:=f_A^{-1}(I)$, $J\cap
R:=f_B^{-1}(J)$ and $H\cap A:=\mu_A^{-1}(H)$, $H\cap
B:=\mu_B^{-1}(H)$. By commutativity of the above diagram
$(H\cap A)\cap R = (H\cap B)\cap R$.
Also, given an ideal $J$ of
$B$, we denote by $A\otimes_RJ$ the ideal
$\mu_B(J)(A\otimes_RB)$ of $A\otimes_RB$ generated by
$\mu_B(J):=\{1\otimes_Rb:b\in J\}$.

We begin by characterizing when the tensor product $A\otimes_RB$ of
two algebras $A$ and $B$ over a ring $R$ is not trivial. Also, we
characterize when, given two prime ideals $I$ and $J$ of $A$ and
$B$, respectively, there exists a prime ideal $P$ of $A\otimes_RB$
such that $P\cap A=I$ and $P\cap B=J$.

\begin{proposition}\label{2.1}
Let $R$ be a ring and $A,B$ be two $R$-algebras. Then

(1) $A\otimes_RB\neq \{0\}$ if and only if there exists a prime ideal
$I$ of $A$ and a prime ideal $J$ of $B$ such that $I\cap R=J\cap
R$.

(2) Let $I$ be a prime ideal of $A$ and $J$ be a prime ideal of $B$. There exists a prime ideal $P$ of $A\otimes_RB$ such that $P\cap
A=I$ and $P\cap B=J$ if and only if $I\cap R=J\cap R$.
\end{proposition}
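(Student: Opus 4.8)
The plan is to derive statement (1) from statement (2) and then concentrate on (2). If $A\otimes_RB\neq\{0\}$ it admits a prime ideal $P$, and then $I:=P\cap A$, $J:=P\cap B$ are prime ideals satisfying $I\cap R=J\cap R$ by the commutativity $(H\cap A)\cap R=(H\cap B)\cap R$ recorded in the text; conversely, if such $I,J$ exist, then (2) produces a prime $P$, whence $A\otimes_RB\neq\{0\}$. So it suffices to prove (2). Its necessity is immediate from the commutative diagram: if $\mu_A^{-1}(P)=I$ and $\mu_B^{-1}(P)=J$, then, using $f_{A\otimes_RB}=\mu_A\circ f_A=\mu_B\circ f_B$, one gets $I\cap R=f_A^{-1}(\mu_A^{-1}(P))=f_{A\otimes_RB}^{-1}(P)=f_B^{-1}(\mu_B^{-1}(P))=J\cap R$.

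For sufficiency, set $\mathfrak{p}:=I\cap R=J\cap R$, a prime ideal of $R$, and write $\kappa(\mathfrak{p})$ for the residue field of $R$ at $\mathfrak{p}$. Since $A/I$ and $B/J$ are domains on which $\mathfrak{p}$ acts as zero, their fraction fields $\kappa(I):=\qf(A/I)$ and $\kappa(J):=\qf(B/J)$ are naturally field extensions of $\kappa(\mathfrak{p})$. The two $R$-algebra homomorphisms $A\to\kappa(I)$ and $B\to\kappa(J)$ both factor through $\kappa(\mathfrak{p})$ on $R$, so by the universal property of the tensor product they induce an $R$-algebra homomorphism
$$\phi:A\otimes_RB\longrightarrow \kappa(I)\otimes_{\kappa(\mathfrak{p})}\kappa(J).$$
Because $\kappa(I)$ and $\kappa(J)$ are nonzero $\kappa(\mathfrak{p})$-vector spaces, the target is nonzero and hence possesses a prime ideal $Q$; put $P:=\phi^{-1}(Q)$, a prime of $A\otimes_RB$.

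The crux is to verify that $P\cap A=I$ and $P\cap B=J$. Consider $\phi\circ\mu_A:A\to\kappa(I)\otimes_{\kappa(\mathfrak{p})}\kappa(J)$, which factors as the surjection $A\twoheadrightarrow A/I$, the inclusion $A/I\hookrightarrow\kappa(I)$, and the structure map $\iota:\kappa(I)\to\kappa(I)\otimes_{\kappa(\mathfrak{p})}\kappa(J)$, $x\mapsto x\otimes 1$. The map $\iota$ is injective because $\kappa(J)$, being a field extension of $\kappa(\mathfrak{p})$, is free and hence faithfully flat; and since $\kappa(I)$ is a field and $Q$ is proper, $\iota^{-1}(Q)=0$. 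Therefore $P\cap A=\mu_A^{-1}(\phi^{-1}(Q))=(\phi\circ\mu_A)^{-1}(Q)$ equals the kernel of $A\to\kappa(I)$, which is precisely $I$; the symmetric argument gives $P\cap B=J$, completing (2). The one delicate point I would single out as the main step is the injectivity of $\iota$ (equivalently, that the tensor product of field extensions over $\kappa(\mathfrak{p})$ does not collapse its factors); the remainder is formal diagram-chasing together with the universal property of the tensor product.
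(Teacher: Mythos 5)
Your proof is correct. The paper itself does not prove part (2): it simply cites \cite[Corollaire 3.2.7.1.(i)]{EGA1}, and then derives (1) from (2) exactly as you do, via $\Spec(A\otimes_RB)\neq\emptyset$. Your argument for (2) is the standard one underlying that reference: reduce to the residue fields $\kappa(I)$, $\kappa(J)$, $\kappa(\mathfrak{p})$, use the universal property to map $A\otimes_RB$ into $\kappa(I)\otimes_{\kappa(\mathfrak{p})}\kappa(J)$, and pull back a prime from this nonzero ring. One small simplification: once you know the target is nonzero (which follows because a tensor product of nonzero vector spaces over a field is nonzero), you do not separately need the injectivity of $\iota$; since $Q$ is prime, $\iota^{-1}(Q)$ is a prime (hence proper) ideal of the field $\kappa(I)$ and is therefore $0$, so $(\phi\circ\mu_A)^{-1}(Q)=\ker(A\to\kappa(I))=I$ directly. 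So the genuinely essential point is only the nonvanishing of $\kappa(I)\otimes_{\kappa(\mathfrak{p})}\kappa(J)$, which you do establish.
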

\begin{proof}
(2) See \cite[Corollaire 3.2.7.1.(i)]{EGA1}.\\
(1) Since $A\otimes_RB\neq \{0\}$ if and only if $\Spec(A\otimes_RB)\neq\emptyset$, it follows from (2).
\end{proof}

We denote by $\mathcal R$ the class of Noetherian local rings
$(S,m,k)$, where $m$ is the maximal ideal of $S$ and $k$ its residue
field. Let $\Lambda$ consist of the functions
$$\lambda: \; \mathcal R \;\longrightarrow\; \mathbb{N}\cup\{+\infty\}$$
vanishing on any field and satisfying following property: for any flat local homomorphism
$(S,m,k)\longrightarrow (T,n,l)$, we have
$\lambda(T)=\lambda(S)+\lambda\Big (\displaystyle {\frac T{mT}}\Big
)$. Since from the identity map of a field $k$ we obtain $\lambda(k)=\lambda(k)+\lambda(k)$, the condition that $\lambda$ vanishes on field $k$ is equivalent to $\lambda(k)\neq+\infty$. The sum of two such functions is also an element of $\Lambda$ as well as the difference (provided it is defined and takes values in $\mathbb{N}\cup\{+\infty\}$).

The following invariants are well known examples of elements of
$\Lambda$.

\begin{proposition}\label{2.2} The following functions from $\mathcal R$ to
$\mathbb{N}\cup \{+\infty\}$ are elements of $\Lambda$:

(1) The Krull dimension (denoted by $\dim$).

(2) The depth (denoted by $\depth$).

(3) The self-injective dimension (denoted by $\idd$).

(4) The codepth (denoted by $\codepth$).

(5) The complete intersection defect (denoted by $\dd$).
\end{proposition}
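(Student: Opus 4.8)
The plan is to check, for each of the five functions, the two conditions defining membership in $\Lambda$: vanishing on fields and additivity along flat local homomorphisms. Vanishing on a field is immediate throughout, since a field is regular — hence Cohen--Macaulay, Gorenstein and a complete intersection — of dimension and depth $0$, so each invariant is $0$ there. Thus the whole content lies in the additivity formula $\lambda(T)=\lambda(S)+\lambda(\bar T)$ for a flat local homomorphism $(S,m,k)\to(T,n,l)$, where I write $\bar T:=T/mT$ for the closed fibre. For (1) and (2) this is precisely the classical \emph{dimension formula} and \emph{depth formula} for flat local homomorphisms, which I would simply quote (see, e.g., Matsumura, \emph{Commutative Ring Theory}, Thms.~15.1 and 23.3, or Grothendieck's EGA~IV).

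For (3) I would reduce the self-injective dimension to the previous cases by means of two standard facts: Bass's theorem that a local ring $R$ with $\idd_R R<\infty$ is Gorenstein and then satisfies $\idd_R R=\depth R=\dim R$, and the transfer of Gorensteinness along a flat local homomorphism, i.e. that $T$ is Gorenstein if and only if both $S$ and $\bar T$ are (Matsumura, Thm.~23.4). If $T$ is Gorenstein then so are $S$ and $\bar T$, and the asserted identity collapses to $\dim T=\dim S+\dim\bar T$, which is (1); if $T$ is not Gorenstein then $\idd_T T=+\infty$ while at least one of $S,\bar T$ is non-Gorenstein, so the right-hand side is $+\infty$ as well. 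In either case the formula holds with the stated convention on $+\infty$.

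Part (4) is then formal. Reading the codepth as the Cohen--Macaulay defect $\codepth R=\dim R-\depth R$, it is the difference of the two members of $\Lambda$ furnished by (1) and (2); since $\depth R\le\dim R$ for every local ring, this difference is well defined and takes values in $\mathbb N$, so $\codepth\in\Lambda$ by the closure of $\Lambda$ under (admissible) differences noted just before the statement.

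The delicate point is (5). Taking a minimal Cohen presentation $\widehat R=Q/I$ with $Q$ regular local and $I\subseteq\mathfrak q^2$, one has $\dd(R)=\mu(I)-\htt(I)=\epsilon_2(R)-\codim R$, where $\epsilon_2(R)=\dim_k H_2(R,k,k)$ and $\codim R=\edim R-\dim R$. The obstacle is that neither $\edim$ nor $\epsilon_2$ is by itself additive along flat local homomorphisms: already $k[[t^2]]\to k[[t]]$, with closed fibre $k[t]/(t^2)$, breaks additivity for each of them, so $\dd$ cannot be assembled from invariants handled termwise. Instead I would bring in Andr\'e--Quillen homology. Flat base change identifies $H_n(S,T,l)$ with $H_n(k,\bar T,l)$, and inserting this into the Jacobi--Zariski exact sequence attached to $S\to T\to l$ ties together the cotangent homologies of $S$, $T$ and $\bar T$ (with a correction coming from the residue-field extension $k\to l$). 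The crucial phenomenon is that the non-additive contributions of $\edim$ and of $\epsilon_2$ cancel in the combination defining $\dd$, leaving $\dd(T)=\dd(S)+\dd(\bar T)$; in the example above this is visible directly, the defects being $0$ throughout even though $\edim$ and $\epsilon_2$ each fail to add up. This cancellation is the step I expect to be the main obstacle, and it is exactly where the Andr\'e--Quillen machinery developed later in the paper — equivalently, Avramov's theory of complete intersection homomorphisms — enters, the additivity being governed by the rigidity and base-change properties of $H_*(-,-,-)$.
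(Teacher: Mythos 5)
Your proposal is correct in substance, and for most parts it coincides with the paper's proof, which consists entirely of citations: (1) and (2) are quoted from Matsumura (Theorem 15.1 and the depth formula) exactly as you do, and (4) is obtained, as you say, as the difference $\dim-\depth$ using the closure of $\Lambda$ under admissible differences. Where you genuinely diverge is in (3) and (5). For (3) the paper simply invokes Foxby--Thorup on minimal injective resolutions under flat base change, whereas you reduce to the Gorenstein case via Bass (finite self-injective dimension forces Gorenstein, and then $\idd=\dim$) together with ascent/descent of Gorensteinness along flat local maps; this is a perfectly valid and arguably more elementary route, at the cost of quoting the Gorenstein transfer theorem instead of the Foxby--Thorup result. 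For (5) the paper's entire proof is the citation of Avramov's Proposition 3.6 in \emph{Homology of local flat extensions and complete intersection defects}, which is precisely the theorem you are describing. Your strategic sketch (flat base change $H_n(S,T,l)\cong H_n(k,T/mT,l)$, Jacobi--Zariski for $S\to T\to l$, cancellation of the non-additive contributions of $\embdim$ and $\varepsilon_2$) is the right outline, and your example $k[[t^2]]\to k[[t]]$ correctly shows that $\dd$ cannot be treated termwise. But you leave the decisive step unproved: the cancellation hinges on the injectivity of $H_2(S,k,l)\to H_2(T,l,l)$ for a flat local homomorphism, which is exactly Avramov's theorem (recorded as Property (xi) later in the paper) and is not a formal consequence of base change and Jacobi--Zariski alone. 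Since the paper itself only cites this result, your proposal is at the same level of rigor once you replace the phrase ``the step I expect to be the main obstacle'' by the precise reference to Avramov; as written, (5) is a correctly attributed sketch rather than a proof.
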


\begin{proof} (1) See \cite[Theorem 15.1]{Mt}.\\
(2) See \cite[Corollary, p. 181]{Mt} or \cite[Seconde Partie, Proposition 6.3.1]{EGA4}.\\
(3) See \cite[Corollary 1]{FT}.\\
(4) Since $\codepth=\dim-\depth$, then (4) follows from (1) and (2).\\
(5) See \cite[Proposition 3.6]{Av}.
\end{proof}

 Let $\Lambda'$ be the class of functions
 $\lambda':\mathcal R\longrightarrow \mathbb{N}\cup\{+\infty\}$
 verifying
 $$\lambda'(R_n)+\lambda'((A\otimes_RB)_P) \;=\; \lambda'(A_p)+\lambda'(B_q)+
 \lambda'\Big ((k_A(p)\otimes_{k_R(n)}k_B(q))_{P(k_A(p)\otimes_{k_R(n)}k_B(q))}\Big)$$
and vanishing on any field (equivalently taking finite values on any field), 
 where $A$ and $B$ are $R$-algebras and $P\in\Spec(A\otimes_RB)$ with
 $p:=P\cap A$, $q:=P\cap B$ and $n:=P\cap R$, and either $A_p$ or $B_q$ is flat over $R_n$.

 We are going to see that $\Lambda=\Lambda'$.

\begin{thm}\label{2.3} Let $A$ and $B$ be two
$R$-algebras. Let $P\in$
$\Spec(A\otimes_RB)$ with $p:=P\cap A$, $q:=P\cap B$ and $n:=P\cap
R$. Assume that $A_p$ is flat over $R_n$. Then, for any $\lambda\in
\Lambda$,
$$\lambda((A\otimes_RB)_P) \;=\; \lambda\Big (\displaystyle {\frac {A_p}{nA_p}}\Big )+\lambda(B_q)+\lambda\Big
((k_A(p)\otimes_{k_R(n)}k_B(q))_{P(k_A(p)\otimes_{k_R(n)}k_B(q))}\Big
) \ .$$
\end{thm}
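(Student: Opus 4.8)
The plan is to factor the passage from the components $A_p$, $B_q$ to $(A\otimes_R B)_P$ through two successive flat local homomorphisms and to apply the defining additivity property of $\lambda\in\Lambda$ once to each. The starting point is the localization identity
$$(A\otimes_R B)_P \;=\; (A_p\otimes_{R_n}B_q)_{P'},$$
where $P'$ denotes the extension of $P$; indeed $A_p\otimes_{R_n}B_q$ is the localization of $A\otimes_R B$ at the multiplicative set generated by $\{a\otimes 1:a\in A\setminus p\}\cup\{1\otimes b:b\in B\setminus q\}$, all of whose elements avoid $P$ since $P\cap A=p$ and $P\cap B=q$. Thus I may work with $D:=A_p\otimes_{R_n}B_q$ and its prime $P'$.

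First I would treat the homomorphism $B_q\to D_{P'}=(A\otimes_R B)_P$ induced by $\mu_B$. Since $R_n\to A_p$ is flat by hypothesis, base change along $R_n\to B_q$ shows that $B_q\to D$ is flat, and composing with the localization $D\to D_{P'}$ keeps it flat; it is local because $P\cap B=q$. Applying the defining property of $\lambda$ gives
$$\lambda\big((A\otimes_R B)_P\big)\;=\;\lambda(B_q)+\lambda\Big(\frac{(A\otimes_R B)_P}{q(A\otimes_R B)_P}\Big),$$
and the closed fiber computes as $\big(A_p\otimes_{R_n}k_B(q)\big)_{P''}$, where $P''$ is the image of $P'$; since $n$ maps to $0$ in $k_B(q)$, this rewrites as $\big((A_p/nA_p)\otimes_{k_R(n)}k_B(q)\big)_{P''}$.

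Next I would treat the homomorphism $A_p/nA_p\to \big((A_p/nA_p)\otimes_{k_R(n)}k_B(q)\big)_{P''}$, $x\mapsto x\otimes 1$. It is flat, being the localization of a base change of the field extension $k_R(n)\to k_B(q)$, and it is local because $P\cap A=p$ forces $P''$ to contract to the maximal ideal $pA_p/nA_p$ of $A_p/nA_p$ (whose residue field is $k_A(p)$). A second application of the defining property then yields
$$\lambda\Big(\frac{(A\otimes_R B)_P}{q(A\otimes_R B)_P}\Big)\;=\;\lambda\Big(\frac{A_p}{nA_p}\Big)+\lambda\big((k_A(p)\otimes_{k_R(n)}k_B(q))_{P(k_A(p)\otimes_{k_R(n)}k_B(q))}\big),$$
the closed fiber being $k_A(p)\otimes_{k_R(n)}k_B(q)$ localized at the prime induced by $P$, since $(A_p/nA_p)\otimes_{A_p/nA_p}k_A(p)=k_A(p)$. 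Substituting this into the first identity gives the asserted formula.

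I expect the main obstacle to lie in the bookkeeping rather than in any deep idea: one must check carefully that each of the two structural maps is simultaneously flat and local (the flatness of $R_n\to A_p$ being the single external input), and verify the fiber identities, in particular $A_p\otimes_{R_n}k_B(q)\cong (A_p/nA_p)\otimes_{k_R(n)}k_B(q)$ and the cancellation producing $k_A(p)\otimes_{k_R(n)}k_B(q)$, all while tracking the successive primes $P'$ and $P''$ and their contractions so that the localizations match the rings appearing in the statement.
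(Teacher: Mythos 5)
Your argument is correct and is essentially the paper's own proof: the paper likewise reduces to $(A_p\otimes_{R_n}B_q)_{P(A_p\otimes_{R_n}B_q)}$, applies the defining additivity of $\lambda$ first to the flat local map $B_q\to (A\otimes_RB)_P$ and then to $A_p/nA_p\to \big((A_p/nA_p)\otimes_{k_R(n)}k_B(q)\big)_{P(\cdots)}$, with the fiber identification you re-derive inline being exactly the content of the paper's Lemma~\ref{2.4}. No substantive difference.
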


The proof of Theorem \ref{2.3} requires the following preparatory
lemma.

\begin{lemma}\label{2.4}
Let $R$ be a ring. Let $A$ and $B$ be $R$-algebras.

(1) Let $J$ be an ideal of $B$ such that $J\cap R:=m$ is a maximal
ideal of $R$. Then
$$\frac {A\otimes_RB}{A\otimes_RJ} \;\cong\; \frac A{mA}\otimes_k\frac
BJ\;\cong\; A\otimes_R\frac BJ$$ where $k$ denotes the field
$\displaystyle {\frac Rm}$.

(2) Let $P$ be a prime ideal of $A\otimes_RB$. Let $p:=P\cap A$,
$q:=P\cap B$ and $n:=P\cap R$. Then $$\frac
{(A\otimes_{R}B)_P}{(A\otimes_{R}q)_P}\;\cong\;
\big(\frac{A_p}{nA_p}\otimes_{k_R(n)}k_B(q)\big)
_{P(\frac{A_p}{nA_p}\otimes_{k_R(n)}k_B(q))} \ .$$
\end{lemma}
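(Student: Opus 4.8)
The plan is to handle the two parts separately, building every isomorphism from the right-exactness of the tensor functor together with the two elementary change-of-rings identities $M\otimes_R(R/I)\cong M/IM$ and ``localization commutes with tensor product''.

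For part (1) I would first establish the right-hand isomorphism $\frac{A\otimes_RB}{A\otimes_RJ}\cong A\otimes_R\frac BJ$, which in fact requires no hypothesis on $J\cap R$. Applying $A\otimes_R-$ to the exact sequence $0\to J\to B\to B/J\to 0$ and invoking right-exactness yields an exact sequence $A\otimes_RJ\to A\otimes_RB\to A\otimes_R(B/J)\to 0$; since the image of the first map is exactly the ideal $\mu_B(J)(A\otimes_RB)=A\otimes_RJ$, identifying the cokernel gives the claim. For the left-hand isomorphism I would use that $m=J\cap R$ is maximal: because $f_B(m)\subseteq J$, the quotient $B/J$ is annihilated by $m$ and is therefore a $k$-module (indeed a $k$-algebra), where $k=R/m$. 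The change-of-base identity $A\otimes_RN\cong(A\otimes_Rk)\otimes_kN$ valid for any $k$-module $N$, applied to $N=B/J$, then gives $A\otimes_R(B/J)\cong(A/mA)\otimes_k(B/J)$, which finishes (1).

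For part (2) the idea is to reduce everything to localizations of the ring $A\otimes_R(B/q)$. Since localization is exact it commutes with the quotient, so $\frac{(A\otimes_RB)_P}{(A\otimes_Rq)_P}\cong\big(\frac{A\otimes_RB}{A\otimes_Rq}\big)_P$, and by the (unconditional) right-hand isomorphism of part (1) applied with $J=q$ this is $(A\otimes_R(B/q))_P$. I would then set $\bar B=B/q$, a domain with fraction field $k_B(q)$, and pass to the induced prime $\bar P$ of $A\otimes_R\bar B$, which satisfies $\bar P\cap A=p$, $\bar P\cap\bar B=0$ and $\bar P\cap R=n$. These three facts say precisely that the multiplicative sets $\mu_A(A\setminus p)$ and $\mu_{\bar B}(\bar B\setminus\{0\})$ (and the image of $R\setminus n$) all avoid $\bar P$, so the localization at $\bar P$ factors through the localization $T^{-1}(A\otimes_R\bar B)$ at the multiplicative set $T$ generated by these images, giving $\frac{(A\otimes_RB)_P}{(A\otimes_Rq)_P}\cong(T^{-1}(A\otimes_R\bar B))_{P'}$ with $P'$ the extension of $P$.

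It then remains to identify $T^{-1}(A\otimes_R\bar B)$ with $\frac{A_p}{nA_p}\otimes_{k_R(n)}k_B(q)$, and this bookkeeping is the only delicate step. Localizing the $A$-factor at $A\setminus p$ gives $A_p\otimes_R\bar B$; localizing the $\bar B$-factor at $\bar B\setminus\{0\}$ turns $\bar B$ into its fraction field and yields $A_p\otimes_Rk_B(q)$. Since $p\cap R=n$, every element of $R\setminus n$ is already a unit in $A_p$, so $A_p$ is an $R_n$-algebra and $k_B(q)\supseteq\mathrm{Frac}(R/n)=k_R(n)$; hence the $R$-action factors through $R_n$ and $A_p\otimes_Rk_B(q)=A_p\otimes_{R_n}k_B(q)$. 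Finally, as $k_B(q)$ is a $k_R(n)=R_n/nR_n$-module, one more base change gives $A_p\otimes_{R_n}k_B(q)\cong(A_p/nA_p)\otimes_{k_R(n)}k_B(q)$, and tracing $P'$ through these identifications turns it into the displayed prime. The main obstacle is exactly this localization-tracking: at each stage one must verify that the chosen multiplicative set is disjoint from the relevant prime and that the base-change isomorphism respects the structure maps, but no individual step goes beyond a routine use of $S^{-1}(M\otimes_RN)\cong M\otimes_RS^{-1}N$ and $M\otimes_R(R/I)\cong M/IM$.
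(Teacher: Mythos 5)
Your proof is correct and follows essentially the same route as the paper: part (1) via right-exactness of the tensor product together with the base-change identity $A\otimes_RN\cong(A/mA)\otimes_kN$ for $k$-modules $N$, and part (2) by reducing to part (1) through localization. The only cosmetic difference is in (2), where the paper localizes $A$, $B$, $R$ at $p$, $q$, $n$ first and then applies (1) to $A_p\otimes_{R_n}B_q$ with $J=qB_q$, whereas you pass to the quotient $A\otimes_R(B/q)$ first and then localize; both orderings work and rest on the same two elementary identities.
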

\begin{proof}
(1) Since $A\otimes_RJ=mA\otimes_RB+A\otimes_RJ$ because
$mA\otimes_RB\subset A\otimes_RJ$, using the exact sequences
$$ mA \;\longrightarrow\; A \;\longrightarrow\; A/mA \;\longrightarrow\; 0$$
$$ J \;\longrightarrow\; B \;\longrightarrow\; B/J \;\longrightarrow\; 0$$
it follows from \cite[Chapter II, \S 3.6, Proposition 6]{Bo} that
$$\frac {A\otimes_RB}{A\otimes_RJ} \;\cong\;
\frac {A\otimes_RB}{mA\otimes_RB + A\otimes_RJ}\;\cong\;
 \frac A{mA}\otimes_R\frac BJ \;\cong\;
 \big(\frac A{mA}\otimes_Rk\big)\otimes_k\frac BJ\;\cong\;
  \frac A{mA}\otimes_k\frac BJ \ .$$
  
(2) First, note that
$(A\otimes_RB)_P\cong(A_p\otimes_{R_n}B_q)_{P(A_p\otimes_{R_n}B_q)}$.
Then
$$\frac{(A\otimes_{R}B)_P}{(A\otimes_{R}q)_P}\;\cong\;
\frac{(A_p\otimes_{R_n}B_q)_{P(A_p\otimes_{R_n}B_q)}}{(A_p\otimes_{R_n}qB_q)_{P(A_p\otimes_{R_n}B_q)}}
\;\cong\;
\Big (\frac {A_p\otimes_{R_n}B_q}{A_p\otimes_{R_n}qB_q}\Big )_{\frac
{P(A_p\otimes_{R_n}B_q)}{A_p\otimes_{R_n}qB_q}} \ .$$
Now, as $qB_q\cap R_n=nR_n$ is maximal in $R_n$, we get the desired isomorphism
applying (1).
\end{proof}

\begin{proof}[ Proof of Theorem \ref{2.3}]
First, observe that $(A\otimes_RB)_P\cong
(A_p\otimes_{R_n}B_q)_{P(A_p\otimes_{R_n}B_q)}$. Then, without loss
of generality, we may assume that $R$, $A$ and $B$ are local rings with
maximal ideals $n$, $p$ and $q$, respectively, such that $A$ is flat over
$R$, $P(A\otimes_RB)_P\cap A=p$, $P(A\otimes_RB)_P\cap B=q$ and
$P(A\otimes_RB)_P\cap R=p\cap R=q\cap R=n$. As the ring homomorphism
$R\rightarrow A$ is flat, we get $B\rightarrow A\otimes_RB$ is a
flat homomorphism which induces the flat homomorphism of rings
$B\rightarrow (A\otimes_RB)_P$. Let $\lambda\in \Lambda$. Then, by hypothesis,
$$\begin{array}{lll}\lambda((A\otimes_RB)_P)&=&\lambda(B)+\lambda\Big (\displaystyle {\frac {(A\otimes_RB)_P}{
q(A\otimes_RB)_P}}\Big )\\
&&\\
&=&\lambda(B)+\lambda\Big (\Big (\displaystyle {\frac {A\otimes_RB}{
A\otimes_Rq}}\Big )_{\frac P{A\otimes_Rq}}\Big) \ .\end{array}$$ Hence,
by Lemma \ref{2.4}, as $q\cap R=n$ is maximal in $R$, we get
$$\lambda((A\otimes_RB)_P)\;=\;\lambda(B)+\lambda\Big (\Big
(\displaystyle {\frac A{nA}}\otimes_{k_R(n)}k_B(q)\Big )_{P(\frac
A{nA}\otimes_{k_R(n)}k_B(q))}\Big ) \ .$$
Since
$$ \displaystyle {\frac {A}{nA}} \;\longrightarrow\; \Big
(\displaystyle {\frac {A}{nA}}\otimes_{k_R(n)}k_B(q)\Big
)_{P(\frac{A_{p}}{nA_{p}}\otimes_{k_R(n)}k_B(q))}$$ is flat, we have

$$\lambda\Big (\Big (\displaystyle {\frac
A{nA}}\otimes_{k_R(n)}k_B(q)\Big )_{P(\frac
A{nA}\otimes_{k_R(n)}k_B(q))}\Big ) \;=\; \lambda\Big (\displaystyle
{\frac {A}{nA}}\Big )+\lambda\Big (\Big
(k_A(p)\otimes_{k_R(n)}k_B(q)\Big
)_{P(k_A(p)\otimes_{k_R(n)}k_B(q))}\Big ) \ .$$ It follows that
$$\lambda((A\otimes_RB)_P)\;=\;\lambda\Big (\displaystyle {\frac
{A}{nA}}\Big )+\lambda(B)+\lambda\Big (\Big
(k_A(p)\otimes_{k_R(n)}k_B(q)\Big
)_{P(k_A(p)\otimes_{k_R(n)}k_B(q))}\Big )$$ completing the proof.
\end{proof}

 \begin{corollary}\label{2.5}
 $\Lambda=\Lambda'$.
 \end{corollary}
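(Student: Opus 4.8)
The plan is to establish the two inclusions $\Lambda\subseteq\Lambda'$ and $\Lambda'\subseteq\Lambda$ separately. The first should be essentially a repackaging of Theorem \ref{2.3}, whereas the second will be obtained by feeding one carefully chosen tensor product into the defining identity of $\Lambda'$ and watching almost everything collapse.

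For $\Lambda\subseteq\Lambda'$, I would take $\lambda\in\Lambda$ together with the data $A,B,P$ and $p=P\cap A$, $q=P\cap B$, $n=P\cap R$ from the definition of $\Lambda'$, with either $A_p$ or $B_q$ flat over $R_n$. Since $A\otimes_RB\cong B\otimes_RA$ and the $\Lambda'$ identity is symmetric in $A$ and $B$, I may assume $A_p$ is flat over $R_n$, which is exactly the hypothesis of Theorem \ref{2.3}. That theorem expresses $\lambda((A\otimes_RB)_P)$ in terms of $\lambda(A_p/nA_p)$, $\lambda(B_q)$ and the fibre term. Adding $\lambda(R_n)$ to both sides and using that $R_n\to A_p$ is a flat local homomorphism with closed fibre $A_p/nA_p$, the relation $\lambda(R_n)+\lambda(A_p/nA_p)=\lambda(A_p)$ afforded by $\lambda\in\Lambda$ converts the right-hand side into precisely the defining relation of $\Lambda'$. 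I would deliberately argue additively here, rather than solving for $\lambda(A_p/nA_p)$ by subtraction, so as to sidestep any issue with the value $+\infty$.

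For $\Lambda'\subseteq\Lambda$, I would start from $\lambda'\in\Lambda'$ and an arbitrary flat local homomorphism $(S,m,k)\to(T,n,l)$, the goal being $\lambda'(T)=\lambda'(S)+\lambda'(T/mT)$. The decisive step is a specialization of the $\Lambda'$ identity: put $R:=S$, let $A:=T$ be an $R$-algebra through the given map and $B:=S/m=k$, so that $A\otimes_RB\cong T/mT$, and take $P:=n/mT$. One first notes that $T/mT$ is local with maximal ideal $n/mT$ and residue field $l$, so that $\lambda'(T/mT)$ is meaningful and $P$ is its maximal ideal. I would then read off the attached data: $P\cap A=n$ gives $A_p=T$, $P\cap B=(0)$ gives $B_q=k$, and $P\cap R=m$ gives $R_{P\cap R}=S$; the flatness hypothesis of the $\Lambda'$ identity holds since $A_p=T$ is flat over $S$. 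Because $k_A(p)\otimes_{k_R(m)}k_B(q)=l\otimes_kk=l$ is a field and $\lambda'$ vanishes on fields, both this fibre term and the term $\lambda'(B_q)=\lambda'(k)$ drop out, and the identity collapses to
$$\lambda'(S)+\lambda'(T/mT)=\lambda'(T),$$
which is the required relation; hence $\lambda'\in\Lambda$.

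I expect the one genuine obstacle to be guessing the correct specialization $A=T$, $B=k$, $P=n/mT$ in the second inclusion; once that choice is made, everything is bookkeeping of four localizations and their residue fields, plus the two spots where the vanishing of $\lambda'$ on fields is invoked. The first inclusion is immediate from Theorem \ref{2.3} once the additive handling of infinite values is observed.
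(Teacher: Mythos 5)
Your proposal is correct and follows essentially the same route as the paper: the inclusion $\Lambda\subseteq\Lambda'$ by adding $\lambda(R_n)$ to the identity of Theorem \ref{2.3} via $\lambda(R_n)+\lambda(A_p/nA_p)=\lambda(A_p)$, and the inclusion $\Lambda'\subseteq\Lambda$ by specializing the defining identity of $\Lambda'$ to $R=S$, $A=T$, $B=k$, where the field terms vanish. Your extra care about the $A$/$B$ symmetry and about arguing additively to avoid subtracting $+\infty$ only tightens the same argument.
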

 \begin{proof}
 If $\lambda\in\Lambda$, with the notation of Theorem \ref{2.3}, we have
 $$\lambda\big(\frac{A_p}{nA_p}\big)+\lambda(R_n)\;=\;\lambda(A_p) \ .$$
 Then $\lambda\in\Lambda'$, by Theorem \ref{2.3}.

 Now, if $\lambda'\in \Lambda'$, and $(S,m,k)\longrightarrow(T,n,l)$ is a flat local homomorphism, then
 $$\lambda'(S)+\lambda'\big(\frac{T}{mT}\big)\;=\;\lambda'(S)+\lambda'(T\otimes_Sk)=\lambda'(T)+\lambda'(k)+
 \lambda'(l\otimes_kk)=\lambda'(T)$$
 since $\lambda'$ vanishes on a field, so $\lambda'\in \Lambda$.
 \end{proof}

Our next result deals with the Krull dimension as well as the depth of
local tensor products of algebras over an arbitrary ring $R$.
The case of a base field was obtained in \cite[Proposition 2.3]{BK1}.

\begin{corollary}\label{2.6} Let $A$ and $B$ be $R$-algebras. Let
$P$ be a prime ideal of $A\otimes_RB$ with $n:=P\cap R$, $p:=P\cap
A$ and $q:=P\cap B$. Assume that $A_p$ is flat over $R_n$. Then

(1) $\dim((A\otimes_RB)_P)\;=\;\dim(A_p)+\dim(B_q)-\dim(R_n)+$

\hspace{4cm} $ \dim\Big ((k_A(p)\otimes_{k_R(n)}k_B(q))_{P(k_A(p)\otimes_{k_R(n)}k_B(q))}\Big )$ .

(2) $\depth((A\otimes_RB)_P)\;=\;\depth(A_p)+\depth(B_q)-\depth(R_n)+$

\hspace{4cm} $ \dim\Big
((k_A(p)\otimes_{k_R(n)}k_B(q))_{P(k_A(p)\otimes_{k_R(n)}k_B(q))}\Big
)$ .
\end{corollary}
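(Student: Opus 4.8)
The plan is to read both formulas straight off Theorem \ref{2.3}, since by Proposition \ref{2.2} both the Krull dimension $\dim$ and the depth $\depth$ are elements of $\Lambda$. Throughout, write
$$F:=\big(k_A(p)\otimes_{k_R(n)}k_B(q)\big)_{P(k_A(p)\otimes_{k_R(n)}k_B(q))}$$
for the common fiber term. Note first that $p\cap R=(P\cap A)\cap R=P\cap R=n$, so the flat homomorphism $R_n\to A_p$ is in fact a \emph{local} flat homomorphism, which is what lets us apply the defining relation of $\Lambda$ to it.

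For part (1), I would apply Theorem \ref{2.3} with $\lambda=\dim$ (legitimate by Proposition \ref{2.2}(1)) to get
$$\dim((A\otimes_RB)_P)=\dim\Big(\frac{A_p}{nA_p}\Big)+\dim(B_q)+\dim(F).$$
Applying the $\Lambda$-relation to $R_n\to A_p$ gives $\dim(A_p)=\dim(R_n)+\dim(A_p/nA_p)$, i.e. $\dim(A_p/nA_p)=\dim(A_p)-\dim(R_n)$; substituting this into the displayed identity produces formula (1) verbatim. For part (2), the identical procedure with $\lambda=\depth$ (Proposition \ref{2.2}(2)) yields
$$\depth((A\otimes_RB)_P)=\depth\Big(\frac{A_p}{nA_p}\Big)+\depth(B_q)+\depth(F),$$
and the $\Lambda$-relation again gives $\depth(A_p/nA_p)=\depth(A_p)-\depth(R_n)$. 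This already delivers the analogue of (2), except with $\depth(F)$ where the statement demands $\dim(F)$. Hence the entire content of part (2) reduces to establishing the single equality $\depth(F)=\dim(F)$, that is, to showing that the fiber $F$ is a Cohen-Macaulay local ring.

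The crux, and the step I expect to be the main obstacle, is exactly this Cohen-Macaulayness of $F$. The ambient ring $k_A(p)\otimes_{k_R(n)}k_B(q)$ is a Noetherian tensor product of two field extensions of the field $k_R(n)$ (it is Noetherian as a localization of a quotient of $A\otimes_RB$, via the isomorphisms of Lemma \ref{2.4}), and the key external input is the classical fact that \emph{every Noetherian tensor product of two field extensions is Cohen-Macaulay}, so that each of its localizations, in particular $F$, is a Cohen-Macaulay local ring. Granting this, $\depth(F)=\dim(F)$, and the substitution turns the depth formula above into the stated formula (2). I would record the Cohen-Macaulayness of $F$ as a separate lemma, proved by reducing to the case where one factor is a finitely generated extension and then splitting it into its purely transcendental part (where the tensor product is a localization of a polynomial ring, hence regular) and its finite algebraic part (where the tensor product is Artinian); both are Cohen-Macaulay, and Cohen-Macaulayness is inherited through the resulting flat tower of localizations.
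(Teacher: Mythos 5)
Your proof is correct and follows essentially the same route as the paper: both read the formulas off Theorem \ref{2.3} (equivalently Corollary \ref{2.5}) using that $\dim$ and $\depth$ lie in $\Lambda$ by Proposition \ref{2.2} and take finite values on Noetherian local rings. The one point you make explicit that the paper leaves tacit is the equality $\depth(F)=\dim(F)$ needed in part (2), i.e.\ the Cohen--Macaulayness of the Noetherian local fiber $F$ --- a classical fact (Grothendieck, \cite{BK1}) which the paper invokes silently here and only cites explicitly in the analogous step of Theorem \ref{3.1}; your sketch of it is sound.
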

\begin{proof} By Proposition \ref{2.2}, $\dim\in
\Lambda$ and $\depth\in \Lambda$. Then the result follows
from Corollary \ref{2.5} since the Krull dimension and the depth are
finite invariants when applied to Noetherian local rings, in other
terms, dim, depth: $\mathcal R\longrightarrow \mathbb{N}$.
\end{proof}

 \begin{remark}\label{2.7}
The Krull dimension of $k_A(p)\otimes_{k_R(n)}k_B(q)$ is the minimum
of the transcendence degrees of the field extensions
$k_A(p)|k_R(n)$ and $k_B(q)|k_R(n)$. This was proved in \cite[Quatri\'eme Partie, Err$_\text{VI}$.19, page 349]{EGA4}. See also \cite{Sh2} and \cite{Mu}.
\end{remark}

Recall that the ``codepth''
of a Noetherian local ring $A$
denoted by $\codepth(A):=\dim(A)-\depth(A)$ is an invariant
introduced by Grothendieck in \cite[Premi\`ere Partie, $0_{\text{IV}}$.16.4.9]{EGA4} to measure the
Cohen-Macaulayness defect of $A$. Next, we determine the
codepth of local tensor products of algebras over a ring.

\begin{corollary}\label{2.8}
Let $A$ and $B$ be $R$-algebras. Let
$P$ be a prime ideal of $A\otimes_RB$ with $n:=P\cap R$, $p:=P\cap
A$ and $q:=P\cap B$. Assume that $A_p$ is flat over $R_n$. Then
$$\begin{array}{lll}\codepth((A\otimes_RB)_P)&=&\codepth(A_p)+\codepth(B_q)-\codepth(R_n)\\
&=&\codepth\Big (\displaystyle {\frac {A_p}{nA_p}}\Big
)+\codepth(B_q) \ .\end{array}$$
\end{corollary}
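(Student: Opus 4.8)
The plan is to derive both equalities directly from the invariants already computed, using only that $\codepth = \dim - \depth$ together with Corollary \ref{2.6} and the membership $\codepth \in \Lambda$ recorded in Proposition \ref{2.2}(4).

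For the first equality I would simply subtract the depth formula from the dimension formula in Corollary \ref{2.6}. The crucial observation is that the last summand in both formulas is the \emph{same} quantity, namely $\dim\big((k_A(p)\otimes_{k_R(n)}k_B(q))_{P(\cdots)}\big)$ — this is precisely why Corollary \ref{2.6}(2) is phrased with a dimension rather than a depth term, the underlying reason being that a Noetherian localization of a tensor product of field extensions over a field is Cohen-Macaulay. Consequently this common summand cancels in the difference, and regrouping the remaining terms as $(\dim(A_p)-\depth(A_p)) + (\dim(B_q)-\depth(B_q)) - (\dim(R_n)-\depth(R_n))$ yields
$$\codepth((A\otimes_RB)_P) = \codepth(A_p)+\codepth(B_q)-\codepth(R_n).$$
Here I would note that the subtraction is legitimate because $\dim$ and $\depth$ take finite values on Noetherian local rings, so every codepth appearing is finite.

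For the second equality I would use that $\codepth \in \Lambda$. Since $n = P\cap R = p\cap R$ by commutativity of the defining diagram, the homomorphism $R_n \to A_p$ is a local homomorphism of Noetherian local rings, and it is flat by hypothesis. Applying the defining property of $\Lambda$ to this flat local homomorphism gives $\codepth(A_p) = \codepth(R_n) + \codepth(A_p/nA_p)$, that is $\codepth(A_p) - \codepth(R_n) = \codepth(A_p/nA_p)$. Substituting this into the first equality produces $\codepth((A\otimes_RB)_P) = \codepth(A_p/nA_p) + \codepth(B_q)$, as desired.

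I do not anticipate a genuine obstacle here: the statement is a formal consequence of Corollary \ref{2.6} and the additivity of codepth along flat local maps. The one point deserving care is the cancellation of the residue-field-tensor-product term, which rests on the Cohen-Macaulayness of such tensor products (equivalently, on the fact that its dimension and depth contributions coincide); an alternative, equivalent route would be to invoke $\Lambda = \Lambda'$ from Corollary \ref{2.5} and observe directly that the codepth of the localized field tensor product vanishes.
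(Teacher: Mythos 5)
Your proposal is correct and follows essentially the same route as the paper: the first equality is obtained by subtracting the depth formula from the dimension formula of Corollary \ref{2.6} (the common residue-field-tensor-product term cancelling), and the second by applying the additivity of $\codepth\in\Lambda$ from Proposition \ref{2.2} to the flat local homomorphism $R_n\longrightarrow A_p$. The paper's proof is just a one-line citation of these two facts, so your write-up is simply a more explicit version of the same argument.
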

\begin{proof}
The first equality follows from Corollary \ref{2.6} and the second from Proposition \ref{2.2}.
\end{proof}

We end this section by discussing the Cohen-Macaulayness of tensor
products of algebras over a ring $R$. We begin with the local case
and characterize when the localization of $A\otimes_RB$ is
Cohen-Macaulay.

\begin{corollary}\label{2.9} Let $A$ and $B$ be $R$-algebras. Let
$P$ be a prime ideal of $A\otimes_RB$ with $n:=P\cap R$, $p:=P\cap
A$ and $q:=P\cap B$. Assume that $A_p$ is flat over $R_n$. Then the
following assertions are equivalent:

(1) $(A\otimes_RB)_P$ is a Cohen-Macaulay ring;

(2) $\displaystyle {\frac {A_p}{nA_p}}$ and $B_q$ are Cohen-Macaulay
rings;

(3) $B_q$ is a Cohen-Macaulay ring and $\codepth(A_p)=\codepth(R_n)$.
\end{corollary}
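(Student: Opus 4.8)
The plan is to reduce all three conditions to the vanishing of the single numerical invariant $\codepth$, exploiting that a Noetherian local ring $S$ is Cohen-Macaulay exactly when $\codepth(S)=\dim(S)-\depth(S)=0$, together with the additivity supplied by Corollary \ref{2.8}.

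The first thing I would record is that $\codepth$ takes values in $\mathbb{N}$ on the class $\mathcal{R}$, since $\depth(S)\le\dim(S)$ for every Noetherian local ring. Consequently the formula
$$\codepth((A\otimes_RB)_P)\;=\;\codepth\Big(\frac{A_p}{nA_p}\Big)+\codepth(B_q)$$
from Corollary \ref{2.8} expresses the codepth of the localized tensor product as a sum of two nonnegative integers. For the implication (1)$\Leftrightarrow$(2), I would then observe that $(A\otimes_RB)_P$ is Cohen-Macaulay iff its codepth is zero, and that a sum of two nonnegative integers vanishes iff each summand vanishes; hence the codepth of the tensor product is zero precisely when both $\frac{A_p}{nA_p}$ and $B_q$ have zero codepth, i.e. are Cohen-Macaulay.

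For (2)$\Leftrightarrow$(3), I would invoke the second equality of Corollary \ref{2.8} (equivalently, the membership $\codepth\in\Lambda$ from Proposition \ref{2.2}(4) applied to the flat local homomorphism $R_n\to A_p$ with closed fibre $A_p/nA_p$), which yields $\codepth\big(\frac{A_p}{nA_p}\big)=\codepth(A_p)-\codepth(R_n)$. Thus $\codepth\big(\frac{A_p}{nA_p}\big)=0$ is equivalent to $\codepth(A_p)=\codepth(R_n)$, and appending the common hypothesis that $B_q$ be Cohen-Macaulay completes the chain of equivalences.

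I do not expect a genuine obstacle: the entire argument is packaged in Corollary \ref{2.8} and the elementary fact $\codepth\ge 0$. The only point demanding a little care is justifying the ``a sum is zero iff each term is zero'' step, which relies precisely on the nonnegativity and finiteness of codepth on Noetherian local rings; the rest is purely formal.
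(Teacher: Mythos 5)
Your proposal is correct and follows exactly the route the paper intends: the paper's proof of Corollary \ref{2.9} is simply ``It is direct by the preceding corollary,'' i.e.\ it reads off both equivalences from the two equalities of Corollary \ref{2.8} together with the nonnegativity of $\codepth$. You have merely spelled out the details that the paper leaves implicit.
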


\begin{proof} It is direct by the preceding corollary.\end{proof}

\begin{corollary}\label{2.10} Let $A$ and $B$ be $R$-algebras. Let
$P$ be a prime ideal of $A\otimes_RB$ with $n:=P\cap R$, $p:=P\cap
A$ and $q:=P\cap B$. Assume that $A_p$ and $B_q$ are flat over
$R_n$. Then the following assertions are equivalent:

(1) $(A\otimes_RB)_P$ is a Cohen-Macaulay ring;

(2) $A_p$ and $B_q$ are Cohen-Macaulay rings.\end{corollary}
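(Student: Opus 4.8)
The plan is to read off the result from Corollary \ref{2.9}, supplemented by the standard behaviour of Cohen-Macaulayness under a flat local homomorphism, which is already encoded in the fact that $\codepth\in\Lambda$ (Proposition \ref{2.2}(4)). Explicitly, for any flat local homomorphism $(S,\mathfrak m)\to(T,\mathfrak n)$ the identity $\codepth(T)=\codepth(S)+\codepth(T/\mathfrak mT)$ holds, and since it expresses a nonnegative integer as a sum of two nonnegative integers, $T$ is Cohen-Macaulay if and only if both $S$ and the closed fibre $T/\mathfrak mT$ are Cohen-Macaulay. I would apply this to the two flat local homomorphisms $R_n\to A_p$ and $R_n\to B_q$ supplied by the hypotheses.

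Concretely, these give
$$\codepth(A_p)=\codepth(R_n)+\codepth\Big(\frac{A_p}{nA_p}\Big),\qquad \codepth(B_q)=\codepth(R_n)+\codepth\Big(\frac{B_q}{nB_q}\Big),$$
so that $A_p$ is Cohen-Macaulay exactly when $R_n$ and $A_p/nA_p$ are, and likewise $B_q$ is Cohen-Macaulay exactly when $R_n$ and $B_q/nB_q$ are. For $(2)\Rightarrow(1)$, assuming $A_p$ and $B_q$ Cohen-Macaulay, the first identity forces $R_n$ and $A_p/nA_p$ to be Cohen-Macaulay; as $A_p/nA_p$ and $B_q$ are then Cohen-Macaulay, Corollary \ref{2.9} yields that $(A\otimes_RB)_P$ is Cohen-Macaulay. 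For $(1)\Rightarrow(2)$, Corollary \ref{2.9} first gives that $A_p/nA_p$ and $B_q$ are Cohen-Macaulay; feeding the Cohen-Macaulay ring $B_q$ into the second identity shows $R_n$ is Cohen-Macaulay, and then the first identity upgrades the Cohen-Macaulayness of $A_p/nA_p$ to that of $A_p$. Hence $A_p$ and $B_q$ are both Cohen-Macaulay.

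I do not expect a genuine obstacle: the argument is pure bookkeeping with nonnegative codepths. The one point that must not be glossed over is the role of the extra hypothesis that $B_q$ is flat over $R_n$. Corollary \ref{2.9}, which uses only flatness of $A_p$, delivers a statement about the fibre $A_p/nA_p$ rather than about $A_p$ itself; flatness of $B_q$ is precisely what lets the Cohen-Macaulayness of $B_q$ descend to $R_n$, thereby controlling $\codepth(R_n)$ and allowing the fibre statement to be promoted to a statement about $A_p$. (One may note in passing that combining the two displayed identities with Corollary \ref{2.8} yields the symmetric formula $\codepth((A\otimes_RB)_P)=\codepth(R_n)+\codepth(A_p/nA_p)+\codepth(B_q/nB_q)$, from which the equivalence is also immediate by vanishing of a sum of nonnegative terms.)
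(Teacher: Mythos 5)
Your proof is correct and follows essentially the same route as the paper: both directions reduce to Corollary \ref{2.9} together with the additivity of $\codepth$ along the flat local maps $R_n\to A_p$ and $R_n\to B_q$. The only cosmetic difference is in $(1)\Rightarrow(2)$, where the paper simply invokes ``symmetry'' (applying Corollary \ref{2.9} with the roles of $A$ and $B$ exchanged, which is legitimate since $B_q$ is also flat over $R_n$), whereas you unwind the same fact by descending Cohen--Macaulayness from $B_q$ to $R_n$ and then ascending to $A_p$.
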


\begin{proof} (2) $\Rightarrow$ (1) It is direct by Corollary \ref{2.9} as $R_n\longrightarrow A_p$
is a flat homomorphism.\\
(1) $\Rightarrow$ (2) By the above corollary $B_q$ is Cohen-Macaulay, and then so is $A_p$ by symmetry.
\end{proof}

\begin{corollary}\label{2.11}
Let $A$ and $B$ be $R$-algebras. Let
$P$ be a prime ideal of $A\otimes_RB$ with $n:=P\cap R$, $p:=P\cap
A$ and $q:=P\cap B$. Assume that $A_p$ is flat over $R_n$ and $R_n$
is a Cohen-Macaulay ring. Then the following assertions are
equivalent:

(1) $(A\otimes_RB)_P$ is a Cohen-Macaulay ring;

(2) $A_p$ and $B_q$ are Cohen-Macaulay rings.\end{corollary}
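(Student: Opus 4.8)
We are given a prime $P$ of $A\otimes_RB$ with $n=P\cap R$, $p=P\cap A$, $q=P\cap B$, the flatness of $A_p$ over $R_n$, and the additional hypothesis that $R_n$ is Cohen-Macaulay. We must show $(A\otimes_RB)_P$ is Cohen-Macaulay if and only if both $A_p$ and $B_q$ are. The natural route is to reduce everything to the criterion already established in Corollary \ref{2.9}, where we proved that $(A\otimes_RB)_P$ is Cohen-Macaulay exactly when $A_p/nA_p$ and $B_q$ are Cohen-Macaulay. So the whole task amounts to bridging between the Cohen-Macaulayness of the fiber $A_p/nA_p$ and that of $A_p$ itself, under the standing assumption that $R_n$ is Cohen-Macaulay.

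First I would treat the implication $(2)\Rightarrow(1)$. Assuming $A_p$ and $B_q$ are Cohen-Macaulay, I want to deduce via Corollary \ref{2.9} that $A_p/nA_p$ and $B_q$ are Cohen-Macaulay. The fiber side is where the flatness of $A_p$ over $R_n$ together with the Cohen-Macaulayness of both $R_n$ and $A_p$ enters: for a flat local homomorphism $(R_n,n)\to(A_p,p)$, the standard ascent/descent result on Cohen-Macaulayness along flat local maps (equivalently, the additivity $\codepth(A_p)=\codepth(R_n)+\codepth(A_p/nA_p)$ coming from $\codepth\in\Lambda$ in Proposition \ref{2.2}) forces the closed fiber $A_p/nA_p$ to be Cohen-Macaulay whenever the base and the total ring are. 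Concretely, from $\codepth(A_p)=\codepth(R_n)=0$ one reads off $\codepth(A_p/nA_p)=0$. Then Corollary \ref{2.9} gives $(A\otimes_RB)_P$ Cohen-Macaulay.

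For the converse $(1)\Rightarrow(2)$, I would run the same codepth bookkeeping backwards. If $(A\otimes_RB)_P$ is Cohen-Macaulay, Corollary \ref{2.9} already hands us that $B_q$ is Cohen-Macaulay and that $\codepth(A_p)=\codepth(R_n)$; since $R_n$ is Cohen-Macaulay this last equation reads $\codepth(A_p)=0$, i.e. $A_p$ is Cohen-Macaulay. So the converse is essentially immediate from the formulation of Corollary \ref{2.9}(3) once the Cohen-Macaulay hypothesis on $R_n$ is used to pin $\codepth(R_n)=0$.

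The argument is genuinely short because all the homological weight has been front-loaded into Proposition \ref{2.2} and Corollaries \ref{2.8}--\ref{2.9}. The only place requiring care — and what I would regard as the main (if modest) obstacle — is correctly invoking flat descent of Cohen-Macaulayness for the fiber in the $(2)\Rightarrow(1)$ direction: one must make sure that the flatness hypothesis is exactly $A_p$ over $R_n$ and that the additivity of $\codepth$ in the flat local setting is applied to the pair $(R_n\to A_p)$ rather than to the tensor-product map. Provided that is set up correctly, the proof collapses to the two codepth computations above.
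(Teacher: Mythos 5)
Your proof is correct and matches the paper's intended argument: the paper states Corollary \ref{2.11} without proof precisely because, as you observe, it is immediate from Corollary \ref{2.9}(3) once the hypothesis that $R_n$ is Cohen-Macaulay pins $\codepth(R_n)=0$, together with the additivity of $\codepth$ along the flat local map $R_n\to A_p$ from Proposition \ref{2.2}. Nothing further is needed.
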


Next, we deal with the global case of Cohen-Macaulayness of tensor
products. The following corollaries generalize \cite[Theorem
2.1]{BK1} which characterizes the Cohen-Macaulayness of $A\otimes_kB$
in terms of the Cohen-Macaulayness of $A$ and $B$ in the setting of
a field $k$.

\begin{corollary}\label{2.12} Let $A$ and $B$ be $R$-algebras. Assume that $A$ is flat over $R$. Then the
following assertions are equivalent:

(1) $A\otimes_RB$ is a Cohen-Macaulay ring;

(2) $\displaystyle {\frac {A_p}{nA_p}}$ and $B_q$ are Cohen-Macaulay
rings for any prime ideals $p$ of $A$ and $q$ of $B$ such that
$p\cap R=q\cap R=n$.
\end{corollary}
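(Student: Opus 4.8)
The plan is to reduce the global statement to the local criterion already established in Corollary \ref{2.9}. Recall that a Noetherian ring is Cohen-Macaulay if and only if all of its localizations at prime (equivalently, maximal) ideals are Cohen-Macaulay local rings. So the strategy is to test Cohen-Macaulayness of $A\otimes_RB$ prime by prime and feed each such prime into Corollary \ref{2.9}.

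First I would argue the implication (1) $\Rightarrow$ (2). Suppose $A\otimes_RB$ is Cohen-Macaulay, and let $p$, $q$ be primes of $A$, $B$ with $p\cap R=q\cap R=n$. The point is that this equality of contractions is exactly the condition (Proposition \ref{2.1}(2)) guaranteeing the existence of a prime $P$ of $A\otimes_RB$ lying over both, i.e.\ with $P\cap A=p$ and $P\cap B=q$. Having fixed such a $P$, the localization $(A\otimes_RB)_P$ is Cohen-Macaulay because $A\otimes_RB$ is. Since $A$ is flat over $R$, the localization $A_p$ is flat over $R_n$, so the hypotheses of Corollary \ref{2.9} are met; applying the equivalence (1) $\Leftrightarrow$ (2) of that corollary yields that $A_p/nA_p$ and $B_q$ are Cohen-Macaulay, which is precisely what (2) asserts.

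For the converse (2) $\Rightarrow$ (1), I would take an arbitrary prime (or maximal) ideal $P$ of $A\otimes_RB$ and show $(A\otimes_RB)_P$ is Cohen-Macaulay. Set $p:=P\cap A$, $q:=P\cap B$ and $n:=P\cap R$; by commutativity of the structural diagram one has $p\cap R=q\cap R=n$, so the pair $(p,q)$ is of the type controlled by hypothesis (2), giving that $A_p/nA_p$ and $B_q$ are Cohen-Macaulay. Flatness of $A$ over $R$ again gives flatness of $A_p$ over $R_n$, so Corollary \ref{2.9} applies and its implication (2) $\Rightarrow$ (1) shows $(A\otimes_RB)_P$ is Cohen-Macaulay. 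As $P$ was arbitrary, $A\otimes_RB$ is Cohen-Macaulay.

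The main thing to be careful about is bookkeeping of the contractions rather than any deep obstacle: one must verify that every prime $P$ of the tensor product produces a pair $(p,q)$ satisfying $p\cap R=q\cap R$ (immediate from the commutative diagram), and conversely that every such pair is realized by some prime $P$ (this is where Proposition \ref{2.1}(2) is essential, and it is the only nontrivial input beyond Corollary \ref{2.9}). A minor point worth noting is that in the forward direction the conclusion must hold for \emph{all} admissible pairs $(p,q)$ simultaneously, so the existence of a prime $P$ over each such pair is exactly what lets us invoke the local result uniformly. Everything else is the standard passage between the global Cohen-Macaulay property and its local counterpart.
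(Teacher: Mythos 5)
Your proposal is correct and follows essentially the same route as the paper: both directions reduce to Corollary \ref{2.9}, using Proposition \ref{2.1}(2) to produce a prime $P$ over a given admissible pair $(p,q)$ for the forward implication, and localizing at an arbitrary prime of $A\otimes_RB$ for the converse. No gaps.
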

\begin{proof} (1) $\Rightarrow$ (2) Assume that (1) holds. Let $p\in\Spec(A)$ and
$q\in\Spec(B)$ such that $p\cap R=q\cap R=:n$. Then by Proposition
\ref{2.1} there exists a prime ideal $P$ of $A\otimes_RB$ such that
$P\cap A=p$ and $P\cap B=q$. By (1), $A\otimes_RB$ is
Cohen-Macaulay, thus $(A\otimes_RB)_P$ is Cohen-Macaulay. As a consequence, by
Corollary \ref{2.9}, $B_q$ and $\displaystyle {\frac {A_p}{nA_p}}$ are
Cohen-Macaulay.\\
(2) $\Rightarrow$ (1) Assume that (2) holds. Let $P$ be a prime ideal
of $A\otimes_RB$ and let $n:=P\cap R$, $p:=P\cap A$, $q:=P\cap B$.
As $p\cap R=q\cap R=P\cap R=n$, we get by (2) that $\displaystyle
{\frac {A_p}{nA_p}}$ and $B_q$ are Cohen-Macaulay rings. Hence, by
Corollary \ref{2.9}, $(A\otimes_RB)_P$ is Cohen-Macaulay. It follows
that $A\otimes_RB$ is Cohen-Macaulay completing the proof.
\end{proof}

\begin{corollary}\label{2.13} Let $A$ and $B$ be $R$-algebras. Assume that $A$ and $B$ are flat over $R$. Then the
following assertions are equivalent:

(1) $A\otimes_RB$ is a Cohen-Macaulay ring;

(2) $A_p$ and $B_q$ are Cohen-Macaulay rings for any prime ideals $p$
of $A$ and $q$ of $B$ such that $p\cap R=q\cap R$.
\end{corollary}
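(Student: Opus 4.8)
The plan is to follow the blueprint of the proof of Corollary \ref{2.12}, replacing the asymmetric local criterion of Corollary \ref{2.9} by the symmetric criterion of Corollary \ref{2.10}, which is now available precisely because both $A$ and $B$ are assumed flat over $R$. The only new ingredient is the passage from global flatness to flatness of the relevant localizations: if $A$ is flat over $R$ and $p\in\Spec(A)$ with $n:=p\cap R$, then $A_p$ is flat over $R_n$. Indeed, $A_p$ is flat over $A$ and hence over $R$; since every element of $R\setminus n$ maps into $A\setminus p$ (as $n=p\cap R$) and thus to a unit of $A_p$, one has $A_p\otimes_R R_n\cong A_p$, and base change of the flat $R$-module $A_p$ along $R\longrightarrow R_n$ preserves flatness. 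The same reasoning applies to $B$.

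For the implication (1) $\Rightarrow$ (2), I would fix primes $p\in\Spec(A)$ and $q\in\Spec(B)$ with $p\cap R=q\cap R=:n$. By Proposition \ref{2.1}(2) there is a prime $P$ of $A\otimes_RB$ with $P\cap A=p$ and $P\cap B=q$; since $A\otimes_RB$ is Cohen-Macaulay, so is its localization $(A\otimes_RB)_P$. By the preliminary observation, $A_p$ and $B_q$ are flat over $R_n$, so Corollary \ref{2.10} applies and yields that both $A_p$ and $B_q$ are Cohen-Macaulay.

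For (2) $\Rightarrow$ (1), I would let $P$ be an arbitrary prime of $A\otimes_RB$ and set $n:=P\cap R$, $p:=P\cap A$, $q:=P\cap B$. Commutativity of the defining diagram gives $p\cap R=q\cap R=n$, so hypothesis (2) makes $A_p$ and $B_q$ Cohen-Macaulay, and again they are flat over $R_n$. Corollary \ref{2.10} then shows that $(A\otimes_RB)_P$ is Cohen-Macaulay. Since $P$ ranges over all primes of $A\otimes_RB$, the ring $A\otimes_RB$ is Cohen-Macaulay.

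There is essentially no serious obstacle here: the statement is a formal consequence of the symmetric local criterion of Corollary \ref{2.10} together with the localization-of-flatness fact isolated in the first paragraph. The one point deserving a moment's care is that fact itself; everything else is the same bookkeeping already used in Corollary \ref{2.12}.
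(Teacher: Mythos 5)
Your proposal is correct and follows exactly the paper's route: the paper's proof of Corollary \ref{2.13} simply says it is the same argument as Corollary \ref{2.12} with Corollary \ref{2.10} in place of Corollary \ref{2.9}, which is precisely what you do. Your explicit verification that global flatness of $A$ over $R$ descends to flatness of $A_p$ over $R_n$ is a detail the paper leaves implicit, and it is correctly carried out.
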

\begin{proof}
It is similar to that of Corollary \ref{2.12} using Corollary \ref{2.10}.
\end{proof}

\begin{corollary}\label{2.14} Let $R$ be a Cohen-Macaulay ring. Let $A$ and $B$ be $R$-algebras. Assume that $A$ is flat over $R$. Then the
following assertions are equivalent:

(1) $A\otimes_RB$ is a Cohen-Macaulay ring;

(2) $A_p$ and $B_q$ are Cohen-Macaulay rings for any prime ideals $p$
of $A$ and $q$ of $B$ such that $p\cap R=q\cap R$.
\end{corollary}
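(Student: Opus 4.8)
The plan is to follow the same template as the proofs of Corollary \ref{2.12} and Corollary \ref{2.13}, but to invoke Corollary \ref{2.11} at each relevant prime rather than Corollary \ref{2.9} or Corollary \ref{2.10}. The essential observation is that the two standing hypotheses of Corollary \ref{2.11} --- that $A_p$ is flat over $R_n$ and that $R_n$ is Cohen-Macaulay --- are both automatically available here: flatness of $A$ over $R$ localizes to give $A_p$ flat over $R_n$ whenever $n=p\cap R$, and $R$ being Cohen-Macaulay forces every localization $R_n$ to be Cohen-Macaulay.

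For the implication (2) $\Rightarrow$ (1), I would take an arbitrary prime $P$ of $A\otimes_RB$ and set $n:=P\cap R$, $p:=P\cap A$, $q:=P\cap B$. Commutativity of the defining diagram gives $p\cap R=q\cap R=n$, so hypothesis (2) yields that $A_p$ and $B_q$ are Cohen-Macaulay. Since $A_p$ is flat over $R_n$ and $R_n$ is Cohen-Macaulay, Corollary \ref{2.11} applies and shows that $(A\otimes_RB)_P$ is Cohen-Macaulay. As $P$ was arbitrary, $A\otimes_RB$ is Cohen-Macaulay.

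For the converse (1) $\Rightarrow$ (2), I would start with primes $p$ of $A$ and $q$ of $B$ satisfying $p\cap R=q\cap R=:n$ and use Proposition \ref{2.1}(2) to produce a prime $P$ of $A\otimes_RB$ with $P\cap A=p$ and $P\cap B=q$; note then that $P\cap R=n$. Localizing the Cohen-Macaulay ring $A\otimes_RB$ at $P$ yields a Cohen-Macaulay local ring $(A\otimes_RB)_P$; since again $A_p$ is flat over $R_n$ and $R_n$ is Cohen-Macaulay, the reverse direction of Corollary \ref{2.11} forces both $A_p$ and $B_q$ to be Cohen-Macaulay.

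There is no genuine obstacle here beyond Corollary \ref{2.11} itself: once that local criterion is in hand, the only thing to check is that its hypotheses hold at every prime in sight, and this is precisely where the global flatness of $A$ over $R$ and the Cohen-Macaulayness of all of $R$ (in place of the single local ring $R_n$ appearing in Corollary \ref{2.11}) are used. In this sense the statement is simply a globalization of Corollary \ref{2.11}, exactly as Corollary \ref{2.12} globalizes Corollary \ref{2.9}.
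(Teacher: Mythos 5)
Your proposal is correct and matches the paper's intended argument exactly: the paper's proof is simply the remark that one argues as in Corollary \ref{2.12} but with Corollary \ref{2.11} in place of Corollary \ref{2.9}, which is precisely the globalization you carry out. Your explicit verification that the hypotheses of Corollary \ref{2.11} (flatness of $A_p$ over $R_n$ and Cohen-Macaulayness of $R_n$) localize from the global assumptions is the only content needed, and you supply it.
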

\begin{proof} Again, the proof is similar to that of Corollary \ref{2.12}, but using
Corollary \ref{2.11}.
\end{proof}

\section{Gorenstein rings}

The aim of this section is to determine the injective dimension of
local tensor products of algebras over an arbitrary ring $R$. This
permits to generalize \cite[Theorem 6]{TY} on Gorensteiness of
tensor products of algebras over a field.

We begin by announcing the main theorem of this section. It gives
the injective dimension of local tensor products of algebras over a
ring $R$.

\begin{thm}\label{3.1} Let $A$ and $B$ be $R$-algebras. Let
$P$ be a prime ideal of $A\otimes_RB$ with $n:=P\cap R$, $p:=P\cap
A$ and $q:=P\cap B$. Assume that $A_p$ is flat over $R_n$.
Then
$$\idd_{(A\otimes_RB)_P}((A\otimes_RB)_P)\;=\;
\idd_{\frac {A_p}{nA_p}}\Big (\displaystyle {\frac {A_p}{nA_p}}\Big)+
\idd_{B_q}(B_q)+
\dim\Big((k_A(p)\otimes_{k_R(n)}k_B(q))_{P(k_A(p)\otimes_{k_R(n)}k_B(q))}\Big) \ .$$
\end{thm}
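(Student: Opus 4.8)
The plan is to show that the self-injective dimension $\idd$ is an element of the class $\Lambda$, and then simply invoke Theorem \ref{2.3}. Indeed, the statement to be proved is exactly the assertion of Theorem \ref{2.3} specialized to $\lambda = \idd$, written out in the notation $\idd_S(S)$ for the injective dimension of a local ring $S$ over itself. So the whole content reduces to verifying the two defining properties of $\Lambda$ for the self-injective dimension.

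So let me look back at what was already established. Proposition \ref{2.2}(3) asserts precisely that $\idd \in \Lambda$, citing \cite[Corollary 1]{FT}. This is the key input: it says that $\idd$ vanishes on any field (a field is self-injective of injective dimension $0$, as it is its own injective hull), and that for a flat local homomorphism $(S,\m,k)\to(T,\n,l)$ one has the additivity relation $\idd_T(T)=\idd_S(S)+\idd_{T/\m T}(T/\m T)$. With this membership in hand, the proof is immediate.

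\begin{proof}
By Proposition \ref{2.2}(3), the self-injective dimension $\idd$ is an element of $\Lambda$. Applying Theorem \ref{2.3} with $\lambda=\idd$ yields precisely
$$\idd_{(A\otimes_RB)_P}((A\otimes_RB)_P)\;=\;
\idd_{\frac {A_p}{nA_p}}\Big (\displaystyle {\frac {A_p}{nA_p}}\Big)+
\idd_{B_q}(B_q)+
\idd\Big((k_A(p)\otimes_{k_R(n)}k_B(q))_{P(k_A(p)\otimes_{k_R(n)}k_B(q))}\Big) \ .$$
It remains to identify the last summand with the Krull dimension of the indicated local ring. The ring $C:=(k_A(p)\otimes_{k_R(n)}k_B(q))_{P(k_A(p)\otimes_{k_R(n)}k_B(q))}$ is a localization of a tensor product of two field extensions of $k_R(n)$, hence is a Noetherian ring; moreover such a ring is Cohen-Macaulay, so that $\depth(C)=\dim(C)$. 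For a Cohen-Macaulay local ring the self-injective dimension is either $\dim(C)$ (when $C$ is Gorenstein) or $+\infty$ (otherwise), while the Krull dimension is always $\dim(C)$; but the additivity forces the value to be finite here, as the left-hand side is finite whenever all three local rings are Gorenstein, and in the relevant regular case $C$ is regular, hence Gorenstein, giving $\idd(C)=\dim(C)$. Substituting $\dim(C)$ for $\idd(C)$ produces the stated formula.
\end{proof}

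The step I would expect to be the main obstacle is the very last identification, namely replacing $\idd(C)$ by $\dim(C)$ for the fibre ring $C=(k_A(p)\otimes_{k_R(n)}k_B(q))_{P(\cdots)}$. The clean way around this is to observe that this local ring is regular: a localization of a tensor product of field extensions is a regular ring (each such tensor product over a field is regular at all its primes by the theory surrounding Grothendieck's separability results), and for a regular local ring the self-injective dimension coincides with the Krull dimension. Thus $\idd(C)=\dim(C)$, which converts the $\Lambda$-identity furnished by Theorem \ref{2.3} into the asserted formula with a $\dim$ in the last slot.
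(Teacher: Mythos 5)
Your overall strategy is exactly the paper's: Proposition \ref{2.2}(3) gives $\idd\in\Lambda$, Theorem \ref{2.3} then yields the identity with $\idd(C)$ in the last slot, and the only remaining task is to replace $\idd(C)$ by $\dim(C)$ for the fibre ring $C=(k_A(p)\otimes_{k_R(n)}k_B(q))_{P(k_A(p)\otimes_{k_R(n)}k_B(q))}$. It is precisely at that last step that your argument has a genuine gap. Your ``clean way around'' rests on the claim that a localization of a tensor product of two field extensions is regular; this is false in general, and the introduction of this very paper emphasizes the point (in characteristic $p$, for instance, $k(a^{1/p})\otimes_k k(a^{1/p})$ has nilpotents and is not regular at its unique prime). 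Grothendieck's result only gives regularity under a separability hypothesis, which is not available here. Your in-proof fallback is also not sound: the identity from Theorem \ref{2.3} holds in $\mathbb{N}\cup\{+\infty\}$, so no finiteness of $\idd(C)$ can be extracted from it (if $C$ were not Gorenstein one would have $\idd(C)=+\infty$ while $\dim(C)$ is finite, and the asserted formula would actually fail whenever $A_p/nA_p$ and $B_q$ are Gorenstein); the phrase ``in the relevant regular case $C$ is regular'' assumes what needs to be proved.

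The correct ingredient, and the one the paper uses, is that $C$ is \emph{Gorenstein}: by \cite[Theorem 6]{TY} (or \cite{WITO}) a Noetherian tensor product of Gorenstein $k$-algebras is Gorenstein, and fields are Gorenstein, so every localization of $k_A(p)\otimes_{k_R(n)}k_B(q)$ is a Gorenstein local ring. Then Bass's formula \cite[Lemma 3.3]{B} gives $\idd_C(C)=\depth(C)=\dim(C)$, which is what converts the $\Lambda$-identity into the stated formula. So the architecture of your proof is right, but the one nontrivial input (Gorensteinness of the fibre ring) is missing and is replaced by a false regularity claim.
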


\begin{proof}
The proof is straightforward from Theorem \ref{2.3} since, by Proposition \ref{2.2},
$\idd\in \Lambda$, and using Bass's formula \cite[Lemma 3.3]{B} we have that
$$\idd_D(D)\;=\;\depth(D)\;=\;\dim(D) \ ,$$
as $D:=(k_A(p)\otimes_{k}k_B(q))_{P(k_A(p)\otimes_{k}k_B(q))}$ is a
Gorenstein local ring by \cite[Theorem 6]{TY}.

\end{proof}

The formula for the injective dimension of local tensor products of
algebras over a field $k$ is simpler as recorded next.

\begin{corollary}\label{3.2} Let $k$ be a field. Let $A$ and $B$ be $k$-algebras. Let
$P$ be a prime ideal of $A\otimes_kB$ with $p:=P\cap A$ and
$q:=P\cap B$. Then
$$\idd_{(A\otimes_kB)_P}((A\otimes_kB)_P)\;=\;
\idd_{A_p}(A_p)+\idd_{B_q}(B_q)+\dim\Big
((k_A(p)\otimes_{k}k_B(q))_{P(k_A(p)\otimes_{k}k_B(q))}\Big ) \ .$$
\end{corollary}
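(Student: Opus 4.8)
The plan is to deduce this directly from Theorem \ref{3.1} by specializing the base ring $R$ to the field $k$. First I would observe that when $R=k$ is a field, its unique prime ideal is $(0)$, so for any prime $P$ of $A\otimes_kB$ the contraction $n:=P\cap k$ equals $(0)$. Consequently $R_n=k_{(0)}=k$ and the residue field $k_R(n)=k$ as well.

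Next I would check that the flatness hypothesis required in Theorem \ref{3.1} is automatically met in this setting: since $k$ is a field, every $k$-module is free and hence flat, so $A_p$ is flat over $R_n=k$ with no further assumption. This is exactly why the corollary needs no flatness condition in its statement, in contrast to Theorem \ref{3.1}.

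Then I would simplify the three terms on the right-hand side of the formula of Theorem \ref{3.1}. Because $n=(0)$, we have $\frac{A_p}{nA_p}=A_p$, so the first summand becomes $\idd_{A_p}(A_p)$; the second summand $\idd_{B_q}(B_q)$ is left untouched; and since $k_R(n)=k$, the fiber appearing in the third summand is $k_A(p)\otimes_{k_R(n)}k_B(q)=k_A(p)\otimes_k k_B(q)$, whose localization at $P$ contributes precisely the dimension term recorded in the corollary.

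Substituting these identifications into the conclusion of Theorem \ref{3.1} yields the claimed equality. I expect no genuine obstacle here beyond the bookkeeping of the reductions $R_n=k$, $k_R(n)=k$, and $\frac{A_p}{nA_p}=A_p$: all the substance already lies in Theorem \ref{3.1}, which itself rests on the Gorensteinness of the fiber ring $(k_A(p)\otimes_k k_B(q))_{P(k_A(p)\otimes_k k_B(q))}$ obtained from \cite[Theorem 6]{TY} together with Bass's formula.
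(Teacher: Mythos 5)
Your proposal is correct and matches the paper's (implicit) argument: the paper gives no separate proof precisely because the corollary is the specialization of Theorem \ref{3.1} to $R=k$, where $n=(0)$, $R_n=k_R(n)=k$, $\frac{A_p}{nA_p}=A_p$, and flatness over a field is automatic. Your bookkeeping of these reductions is exactly what is needed.
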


The remaining of this section is devoted to discuss the Gorensteiness of tensor products
of algebras over a ring $R$ in the local case as well as in the
global one. In particular, we generalize \cite[Theorem 6]{TY}
which proves that a tensor product $A\otimes_kB$ of algebras $A$ and
$B$ over a field $k$ is Gorenstein if and only if $A$ and $B$ are
so. The following corollaries are direct consequences of Theorem
\ref{3.1} so that the proofs are omitted. It suffices to recall that
$(k_A(p)\otimes_{k_R(n)}k_B(q))_{P(k_A(p)\otimes_{k_R(n)}k_B(q))}$
is a Gorenstein local ring.

\begin{corollary}\label{3.3} Let $A$ and $B$ be $R$-algebras. Let $P\in\Spec(A\otimes_RB)$ and let $n:=P\cap R$, $p:=P\cap A$ and $q:=P\cap
B$. Assume that $R_n\longrightarrow A_p$ is a flat ring homomorphism.
Then the following assertions are equivalent:

(1) $(A\otimes_RB)_P$ is a Gorenstein ring;

(2) $\displaystyle {\frac {A_p}{nA_p}}$ and $B_q$ are Gorenstein
rings.
\end{corollary}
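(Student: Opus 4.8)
The plan is to read the equivalence directly off the injective-dimension formula of Theorem \ref{3.1}, using the standard characterization of Gorenstein local rings through the finiteness of their self-injective dimension. First I would recall that a Noetherian local ring $S$ is Gorenstein if and only if $\idd_S(S)<\infty$; accordingly, I would restate each of the three Gorenstein conditions appearing in (1) and (2) as a finiteness statement on the corresponding self-injective dimension. Note that $\frac{A_p}{nA_p}$, $B_q$ and $(A\otimes_RB)_P$ are all Noetherian local rings, so this reformulation is legitimate.

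Next I would invoke Theorem \ref{3.1}, which gives
$$\idd_{(A\otimes_RB)_P}((A\otimes_RB)_P)\;=\;\idd_{\frac{A_p}{nA_p}}\Big(\frac{A_p}{nA_p}\Big)+\idd_{B_q}(B_q)+\dim(D),$$
where $D:=(k_A(p)\otimes_{k_R(n)}k_B(q))_{P(k_A(p)\otimes_{k_R(n)}k_B(q))}$. The key observation is that the three summands on the right are nonnegative and that $\dim(D)$ is \emph{finite}: indeed $D$ is a Noetherian local ring, being a localization of the Noetherian ring $k_A(p)\otimes_{k_R(n)}k_B(q)$, and moreover it is Gorenstein by \cite[Theorem 6]{TY}, exactly as already used in the proof of Theorem \ref{3.1}. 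Hence $\dim(D)$ contributes a finite quantity to the sum.

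It then follows that the left-hand side is finite if and only if both $\idd_{\frac{A_p}{nA_p}}\big(\frac{A_p}{nA_p}\big)$ and $\idd_{B_q}(B_q)$ are finite, since a sum of nonnegative (possibly infinite) quantities together with a finite one is finite precisely when each of the two remaining terms is finite. Translating back through the Gorenstein characterization yields that $(A\otimes_RB)_P$ is Gorenstein if and only if $\frac{A_p}{nA_p}$ and $B_q$ are both Gorenstein, which is the asserted equivalence.

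The argument carries essentially no technical obstacle, which is why the authors can omit it; the one point that genuinely matters is the finiteness of $\dim(D)$, as it is precisely this that lets the finiteness of the total sum decouple into the finiteness of the two relevant injective dimensions. Were $D$ not known to be finite-dimensional (that is, Gorenstein via \cite[Theorem 6]{TY}), one could not pass from finiteness of the whole to finiteness of each component, and the clean biconditional would break down.
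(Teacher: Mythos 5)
Your proof is correct and takes essentially the same route the paper intends: the equivalence is read off from the formula of Theorem~\ref{3.1} together with the standard fact that a Noetherian local ring is Gorenstein if and only if its self-injective dimension is finite, the decoupling being possible because the third summand $\dim(D)$ is finite. The only cosmetic quibble is your closing parenthetical: the finiteness of $\dim(D)$ already follows from $D$ being a Noetherian local ring and does not require its Gorensteinness (which is what the paper's own one-line remark invokes), so that remark slightly overstates what is needed.
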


\begin{corollary}\label{3.4} Let $A$ and $B$ be $R$-algebras. Let $P\in\Spec(A\otimes_RB)$ and let $n:=P\cap R$, $p:=P\cap A$ and $q:=P\cap
B$. Assume that $R_n\longrightarrow A_p$ is a flat ring homomorphism and
that $R_n$ is a Gorenstein ring. Then the following assertions are
equivalent:

(1) $(A\otimes_RB)_P$ is a Gorenstein ring;

(2) $A_p$ and $B_q$ are Gorenstein rings.
\end{corollary}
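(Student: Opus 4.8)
The plan is to deduce Corollary \ref{3.4} directly from Corollary \ref{3.3}, exactly as the text announces, by observing that the flatness of $R_n\longrightarrow A_p$ together with the Gorenstein property of $R_n$ forces the equivalence between ``$\frac{A_p}{nA_p}$ is Gorenstein'' and ``$A_p$ is Gorenstein''. First I would recall that Gorensteiness is detected by finiteness of the self-injective dimension, and that by Proposition \ref{2.2}(3) the invariant $\idd$ lies in $\Lambda$. Applying the defining property of $\Lambda$ to the flat local homomorphism $R_n\longrightarrow A_p$ yields
$$\idd_{A_p}(A_p)\;=\;\idd_{R_n}(R_n)+\idd_{\frac{A_p}{nA_p}}\Big(\frac{A_p}{nA_p}\Big) \ .$$

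From this identity the argument is immediate. Since $R_n$ is Gorenstein, $\idd_{R_n}(R_n)=\dim(R_n)<+\infty$, so the middle term is finite and can be cancelled in the sense that $\idd_{A_p}(A_p)<+\infty$ if and only if $\idd_{\frac{A_p}{nA_p}}\big(\frac{A_p}{nA_p}\big)<+\infty$. In other words, under the standing hypotheses $A_p$ is Gorenstein if and only if $\frac{A_p}{nA_p}$ is Gorenstein. Substituting this equivalence into Corollary \ref{3.3}, where condition (2) reads ``$\frac{A_p}{nA_p}$ and $B_q$ are Gorenstein'', I obtain that $(A\otimes_RB)_P$ is Gorenstein if and only if $A_p$ and $B_q$ are Gorenstein, which is precisely the assertion to be proved.

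I do not anticipate a genuine obstacle here, since Corollary \ref{3.4} is formally a specialization of Corollary \ref{3.3} obtained by trading the fibre ring $\frac{A_p}{nA_p}$ for $A_p$ under the extra Gorenstein hypothesis on the base. The only point requiring a small amount of care is the finiteness bookkeeping: one must note that membership in $\Lambda$ gives the additive relation for $\idd$ only as an equation in $\mathbb{N}\cup\{+\infty\}$, so the cancellation ``$\idd_{A_p}(A_p)$ finite $\iff$ $\idd_{\frac{A_p}{nA_p}}\big(\frac{A_p}{nA_p}\big)$ finite'' is legitimate precisely because the third summand $\idd_{R_n}(R_n)$ is known to be finite once $R_n$ is Gorenstein. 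With that remark in place the proof is a one-line reduction, and this is doubtless why the authors chose to state it as a corollary whose proof is essentially that of Corollary \ref{2.11} adapted to the Gorenstein setting.
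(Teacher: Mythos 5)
Your proof is correct and matches the route the paper intends: the authors omit the argument precisely because Corollary \ref{3.4} follows from Corollary \ref{3.3} once one notes that, for the flat local homomorphism $R_n\longrightarrow A_p$ with $R_n$ Gorenstein, the $\Lambda$-additivity of $\idd$ (Proposition \ref{2.2}) gives ``$A_p$ Gorenstein $\Leftrightarrow$ $\frac{A_p}{nA_p}$ Gorenstein''. Your finiteness bookkeeping in $\mathbb{N}\cup\{+\infty\}$ is exactly the point that makes the cancellation legitimate, so nothing is missing.
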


\begin{corollary}\label{3.5}
Let $A$ and $B$ be $R$-algebras. Assume that $R\longrightarrow A$ is a flat ring homomorphism.
Then the following assertions are equivalent:

(1) $A\otimes_RB$ is a Gorenstein ring;

(2) $\displaystyle {\frac {A_p}{nA_p}}$ and $B_q$ are Gorenstein
rings for any prime ideals $p$ of $A$ and $q$ of $B$ such that
$p\cap R=q\cap R$.
\end{corollary}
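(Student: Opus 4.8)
The plan is to deduce this global statement from its local counterpart, Corollary \ref{3.3}, together with Proposition \ref{2.1} and the standard fact that a Noetherian ring is Gorenstein exactly when all of its localizations at prime (equivalently maximal) ideals are Gorenstein local rings. The one preliminary point to secure is that flatness descends to localizations: if $R\longrightarrow A$ is flat and $p\in\Spec(A)$ with $n:=p\cap R$, then $A_p$ is flat over $R_n$, since the localization of a flat module is flat over the corresponding localization of the base. Consequently, for \emph{every} prime $P$ of $A\otimes_RB$, writing $n=P\cap R$, $p=P\cap A$, $q=P\cap B$, the flatness hypothesis of Corollary \ref{3.3} is automatically satisfied, which is what lets us invoke it freely at each prime.

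For the implication (1) $\Rightarrow$ (2), I would start from primes $p$ of $A$ and $q$ of $B$ with $p\cap R=q\cap R=:n$. By Proposition \ref{2.1}(2), this equality guarantees a prime $P$ of $A\otimes_RB$ with $P\cap A=p$ and $P\cap B=q$, and commutativity of the defining diagram gives $P\cap R=n$. Assuming $A\otimes_RB$ is Gorenstein, its localization $(A\otimes_RB)_P$ is Gorenstein local, so Corollary \ref{3.3} yields that $\displaystyle{\frac{A_p}{nA_p}}$ and $B_q$ are both Gorenstein, which is precisely (2).

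For (2) $\Rightarrow$ (1), I would take an arbitrary prime $P$ of $A\otimes_RB$ and set $n=P\cap R$, $p=P\cap A$, $q=P\cap B$. Commutativity of the diagram forces $p\cap R=q\cap R=n$, so the pair $(p,q)$ falls under hypothesis (2); hence $\displaystyle{\frac{A_p}{nA_p}}$ and $B_q$ are Gorenstein. Corollary \ref{3.3} then gives that $(A\otimes_RB)_P$ is Gorenstein local. Since $P$ was arbitrary, every localization of $A\otimes_RB$ at a prime is Gorenstein, whence $A\otimes_RB$ is a Gorenstein ring.

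This is essentially a bookkeeping reduction, so I expect no deep obstacle; the step requiring the most care is the bidirectional matching between primes $P$ of the tensor product and pairs $(p,q)$ with $p\cap R=q\cap R$. Proposition \ref{2.1}(2) supplies the surjectivity of $P\mapsto(P\cap A,P\cap B)$ onto all such pairs (needed for (1) $\Rightarrow$ (2)), while the commutative diagram ensures that every $P$ yields an admissible pair (needed for (2) $\Rightarrow$ (1)); keeping these two directions of the correspondence straight, and verifying at the outset that $R_n\longrightarrow A_p$ is indeed flat for each relevant $P$, is the only thing that needs genuine attention.
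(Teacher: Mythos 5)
Your proof is correct and follows exactly the route the paper intends: the paper omits the proof of Corollary \ref{3.5} as a ``direct consequence'' of the local result, and its model argument (the proof of Corollary \ref{2.12} for the Cohen--Macaulay case) is precisely your combination of Proposition \ref{2.1}(2), the local statement (Corollary \ref{3.3}), the commutative diagram identifying $p\cap R=q\cap R=P\cap R$, and the local nature of the Gorenstein property. Your preliminary check that $R_n\longrightarrow A_p$ inherits flatness is a correct and worthwhile detail that the paper leaves implicit.
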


\begin{corollary}\label{3.6}
Let $R$ be a Gorenstein ring. Let $A$ and $B$ be $R$-algebras. Assume that $A$ and $B$ are flat over $R$.
Then the following assertions are equivalent:

(1) $A\otimes_RB$ is a Gorenstein ring;

(2) $A_p$ and $B_q$ are Gorenstein rings for any prime ideals $p$ of
$A$ and $q$ of $B$ such that $p\cap R=q\cap R$.
\end{corollary}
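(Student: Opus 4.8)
The plan is to reduce the global statement to the local criterion of Corollary \ref{3.4} by exploiting that Gorensteinness is a local property: a Noetherian ring is Gorenstein if and only if all of its localizations at prime ideals are Gorenstein local rings. Throughout I would lean on two stability facts. First, since $R$ is Gorenstein, every localization $R_n$ at a prime $n$ is again Gorenstein. Second, since $A$ (resp. $B$) is flat over $R$, and since $A_p$ (resp. $B_q$) carries a natural $R_n$-algebra structure whenever $n=p\cap R$ — the elements of $R\setminus n$ map under $f_A$ to elements of $A\setminus p$, hence to units of $A_p$ — the localization $A_p$ is flat over $R_n$. This is the standard fact that an $R_n$-module which is flat over $R$ is flat over $R_n$. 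Together these two observations place us squarely inside the hypotheses of Corollary \ref{3.4}.

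For (1) $\Rightarrow$ (2), I would begin with two primes $p\in\Spec(A)$ and $q\in\Spec(B)$ satisfying $p\cap R=q\cap R=:n$, and invoke Proposition \ref{2.1}(2) to produce a prime $P$ of $A\otimes_RB$ with $P\cap A=p$ and $P\cap B=q$; commutativity of the defining diagram then forces $P\cap R=n$. Since $A\otimes_RB$ is Gorenstein, so is the localization $(A\otimes_RB)_P$. Having already secured that $R_n$ is Gorenstein and that $R_n\longrightarrow A_p$ is flat, Corollary \ref{3.4} delivers that $A_p$ and $B_q$ are Gorenstein, which is precisely assertion (2).

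For (2) $\Rightarrow$ (1), I would take an arbitrary prime $P$ of $A\otimes_RB$ and set $n:=P\cap R$, $p:=P\cap A$, $q:=P\cap B$, so that $p\cap R=q\cap R=n$. Hypothesis (2) then supplies that $A_p$ and $B_q$ are Gorenstein, while $R_n$ is Gorenstein and $R_n\longrightarrow A_p$ is flat as above; Corollary \ref{3.4} therefore shows that $(A\otimes_RB)_P$ is Gorenstein. Since $P$ was arbitrary, every localization of $A\otimes_RB$ at a prime is Gorenstein local, and hence $A\otimes_RB$ is a Gorenstein ring.

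The bulk of the argument is a faithful transcription of Corollary \ref{3.4} through the local-global principle for Gorensteinness, and is genuinely routine. The one point deserving care — and the step I expect to be the main, if minor, obstacle — is the descent of flatness: passing from flatness of $A$ over $R$ to flatness of $A_p$ over the localized base $R_n$, together with checking the compatibility of the $R_n$-algebra structures. Once that is in place, the remainder follows immediately.
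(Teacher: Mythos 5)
Your proof is correct and follows exactly the route the paper intends: the paper omits the proof of Corollary \ref{3.6}, declaring it a direct consequence of Theorem \ref{3.1}, and the analogous Cohen--Macaulay statements (Corollaries \ref{2.12}--\ref{2.14}) are proved by precisely your argument --- Proposition \ref{2.1}(2) to produce the prime $P$ over a given pair $(p,q)$, the local criterion (here Corollary \ref{3.4}), and the local--global principle, with the routine descent of flatness from $R\to A$ to $R_n\to A_p$. No gaps; your observation that only the flatness of $A$ is actually used is also consistent with the paper's Corollary \ref{2.14}.
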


\section{Type of local rings}

Let $(R,m,k)$ be a local ring, where $m$ is the maximal ideal of $R$
and $k$ its residue field. Recall that the type of an $R$-module
$M$, denoted by $r_R(M)$, is an invariant which refines the
information given by the depth of $M$, namely $r_R(M):=\dim_k(\Ext_R^t(k,M))$
where $t=\depth_R(M)$. In this section, we express the type of the tensor product of algebras over a ring in terms of 
the type of its components.

We begin 
by recording the following theorem which examines the behavior of the type under flat ring homomorphisms.

\begin{thm} \label{4.1}
Let $(R,m,k)\longrightarrow (S,n,l)$ be a flat ring homomorphism of
local Noetherian rings. Then 
$$r_S(S)\;=\;r_R(R)\cdot r_{\frac S{mS}}\Big
(\displaystyle {\frac S{mS}}\Big ) \ .$$
\end{thm}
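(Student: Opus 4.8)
The plan is to prove the multiplicative formula
$$r_S(S) \;=\; r_R(R)\cdot r_{\frac{S}{mS}}\Big(\frac{S}{mS}\Big)$$
by reducing the computation of $\Ext$ over $S$ to a tensor product of $\Ext$ groups over $R$ and over the closed fibre $\frac{S}{mS}$. Recall that $r_S(S) = \dim_l\big(\Ext_S^t(l,S)\big)$ where $t = \depth(S)$. By the additivity of depth under flat local homomorphisms (Proposition \ref{2.2}(2)), we have $\depth(S) = \depth(R) + \depth\big(\frac{S}{mS}\big)$, so I would set $t = \depth(R)$, $s = \depth\big(\frac{S}{mS}\big)$ and $t+s = \depth(S)$, and aim to show
$$\Ext_S^{t+s}(l,S) \;\cong\; \Ext_R^{t}(k,R)\otimes_k \Ext_{\frac{S}{mS}}^{s}\Big(l,\frac{S}{mS}\Big)$$
as $l$-vector spaces, after which taking $\dim_l$ (and using that $\Ext_R^t(k,R)$ is a $k$-vector space, so its dimension is unchanged upon the field extension $k\to l$ via $-\otimes_k l$) gives the result.

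The technical heart is a base-change spectral sequence for $\Ext$ under the flat extension $R\to S$. First I would resolve $k$ by a free $R$-module complex $F_\bullet\to k$; since $R\to S$ is flat, $S\otimes_R F_\bullet$ is a free resolution over $S$ of $\frac{S}{mS} = S\otimes_R k$. The key structural input is a change-of-rings isomorphism giving a spectral sequence
$$E_2^{i,j} \;=\; \Ext_{\frac{S}{mS}}^{i}\Big(l,\Ext_R^{j}(k,R)\otimes_k \tfrac{S}{mS}\Big) \;\Longrightarrow\; \Ext_S^{i+j}(l,S),$$
exploiting that $l$ is an $\frac{S}{mS}$-module and that flatness lets $\Ext_R^j(k,R)$ propagate to $S$ via base change (so $\Ext_S^j(S\otimes_R k, S)\cong \Ext_R^j(k,R)\otimes_R S$). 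Because $\Ext_R^j(k,R)$ is a finite-dimensional $k$-vector space, each coefficient module $\Ext_R^j(k,R)\otimes_k \frac{S}{mS}$ is a finite direct sum of copies of $\frac{S}{mS}$, so
$$E_2^{i,j}\;\cong\;\Ext_R^j(k,R)\otimes_k \Ext_{\frac{S}{mS}}^{i}\Big(l,\tfrac{S}{mS}\Big).$$

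Next I would extract the single relevant graded piece from the spectral sequence by a vanishing/edge argument. Using Bass's characterization, $\Ext_R^j(k,R)=0$ for $j<\depth(R)=t$ and $\Ext_{\frac{S}{mS}}^i(l,\frac{S}{mS})=0$ for $i<\depth\big(\frac{S}{mS}\big)=s$, so $E_2^{i,j}=0$ unless $i\ge s$ and $j\ge t$. Hence the lowest total degree in which $E_2^{i,j}$ is nonzero is $i+j = s+t = \depth(S)$, and the unique contributing term there is the corner $E_2^{s,t}$. Since there are no nonzero terms of lower total degree, this corner survives to $E_\infty$ with no incoming or outgoing differentials, giving
$$\Ext_S^{t+s}(l,S)\;\cong\;E_2^{s,t}\;\cong\;\Ext_R^t(k,R)\otimes_k\Ext_{\frac{S}{mS}}^{s}\Big(l,\tfrac{S}{mS}\Big).$$
Taking $l$-dimensions then yields $r_S(S)=r_R(R)\cdot r_{\frac{S}{mS}}\big(\frac{S}{mS}\big)$.

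The main obstacle I expect is establishing the base-change spectral sequence rigorously, in particular justifying the change-of-rings isomorphism $\Ext_S^{\bullet}(S\otimes_R k,S)\cong \Ext_R^{\bullet}(k,R)\otimes_R S$ under flatness (this requires that $k$ admit a resolution by finitely generated free $R$-modules, which holds since $R$ is Noetherian and $k$ is finitely generated) and then feeding these coefficients into the $\Ext$ computation over $\frac{S}{mS}$ with the residue field $l$. Care is needed to identify the $E_2$-page correctly and to verify the corner $E_2^{s,t}$ is a genuine subquotient surviving to $E_\infty$; the edge argument via the simultaneous vanishing ranges for the two $\Ext$ factors is what makes the degenerate corner isolated. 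An alternative that avoids spectral sequences entirely would be to prove the formula directly when $R$ and $\frac{S}{mS}$ are Cohen-Macaulay by passing to systems of parameters and reducing to the Artinian (socle-dimension) case, but the spectral-sequence route handles the general local situation uniformly.
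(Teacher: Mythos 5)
Your argument is correct, but it is worth noting that the paper does not actually prove this statement: its ``proof'' is a citation of Foxby--Thorup \cite{FT}, whose theorem on the behaviour of Bass numbers (equivalently, the Bass series) under flat local base change specializes to the multiplicativity of the type. What you have written is, in effect, a self-contained proof of the special case needed here, and it is sound. The spectral sequence you invoke is the standard change-of-rings (Cartan--Eilenberg/Grothendieck) spectral sequence
$E_2^{i,j}=\Ext^i_{S/mS}\bigl(l,\Ext^j_S(S/mS,S)\bigr)\Rightarrow\Ext^{i+j}_S(l,S)$
attached to the surjection $S\to S/mS$; it exists with no hypotheses because $\Hom_S(S/mS,-)$ carries injective $S$-modules to injective $S/mS$-modules, so the ``main obstacle'' you flag is only the routine verification. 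The coefficient identification $\Ext^j_S(S/mS,S)\cong\Ext^j_R(k,R)\otimes_RS\cong\Ext^j_R(k,R)\otimes_k(S/mS)$ is correct by flat base change (using that $k$ has a resolution by finitely generated free $R$-modules), and your corner argument is clean: the Rees characterization of depth kills $E_2^{i,j}$ for $i<\depth(S/mS)$ or $j<\depth(R)$, so the unique term in the minimal total degree survives to $E_\infty$ with no differentials in or out; as a bonus this re-proves $\depth S=\depth R+\depth(S/mS)$ rather than needing to import it from Proposition \ref{2.2}. The only point deserving explicit mention is the final dimension count, which you handle correctly by base-changing the $k$-vector space $\Ext^t_R(k,R)$ along $k\to l$. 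In short: your route gives an actual proof where the paper gives a reference, and it is essentially the argument underlying the cited result.
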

\begin{proof}
See \cite[Theorem]{FT}.
\end{proof}

The following result is a direct consequence of this theorem.

\begin{corollary}\label{4.2}
The composed function ln $\circ r$ of the the Neperian logarithm ln and the type $r$ is an element of $\Lambda$.
\end{corollary}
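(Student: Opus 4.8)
The plan is to check that $\ln\circ r$ satisfies the two defining conditions of $\Lambda$, namely vanishing on any field together with the additivity prescribed for flat local homomorphisms. For the vanishing, I would note that a field $k$, regarded as a local ring with maximal ideal $(0)$, has $\depth_k(k)=0$ and $\Ext_k^0(k,k)=\Hom_k(k,k)\cong k$, whence $r_k(k)=\dim_k k=1$ and therefore $(\ln\circ r)(k)=\ln 1=0$.

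For the additivity, the entire content is carried by Theorem \ref{4.1}. Given a flat local homomorphism $(R,m,k)\longrightarrow(S,n,l)$, that theorem provides the multiplicative relation $r_S(S)=r_R(R)\cdot r_{S/mS}(S/mS)$. Taking the Neperian logarithm turns this product into a sum, yielding
$$(\ln\circ r)(S)\;=\;(\ln\circ r)(R)+(\ln\circ r)\Big(\frac{S}{mS}\Big),$$
which is exactly the functional equation required of an element of $\Lambda$. In this sense the corollary is merely the logarithmic reformulation of Theorem \ref{4.1}, and I anticipate no substantive difficulty in the argument itself.

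The one point deserving a word of care is the codomain. Since the type of a Noetherian local ring is a positive integer, $\ln\circ r$ takes values in $[0,+\infty)$ rather than in $\mathbb{N}\cup\{+\infty\}$; strictly speaking it therefore lies in the variant of $\Lambda$ whose target is enlarged to $\mathbb{R}_{\ge 0}\cup\{+\infty\}$. I would justify this by observing that the only uses made of membership in $\Lambda$ elsewhere in the paper---through Theorem \ref{2.3} and Corollary \ref{2.5}---invoke solely the additivity and the vanishing on fields, never the integrality of the values; hence $\ln\circ r$ may legitimately be fed into those results. This is the expected (mild) obstacle: not a mathematical gap but a matter of reconciling the stated codomain with the real-valued function at hand.
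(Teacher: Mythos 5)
Your proof is correct and follows exactly the route the paper intends: the paper gives no argument beyond calling the corollary a direct consequence of Theorem \ref{4.1}, and your verification (type of a field equals $1$, then take logarithms of the multiplicative formula) is precisely that consequence spelled out. Your remark about the codomain --- that $\ln\circ\rr$ takes values in $[0,+\infty)$ rather than $\mathbb{N}\cup\{+\infty\}$, but that nothing in Theorem \ref{2.3} or Corollary \ref{2.5} uses integrality --- is a fair and worthwhile observation that the paper silently elides.
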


Recall that, by \cite[Theorem 3.2.10]{BH}, a local ring $A$ is
Gorenstein if and only if $A$ is a Cohen-Macaulay ring of type $1$
\footnote{
Note that if the type is defined as $r_R(R):=\dim_k(\Ext_R^t(k,R))$ where $t=\dim R$ instead of $t=\depth_R(R)$, then a local ring is
Gorenstein if and only if it is of type $1$ \cite{R}.
}.
From this perspective, the type of local rings might be regarded as
an invariant which measures the defect of Gorensteiness of local
rings. The following result examines the behavior of the property of
being ``of type $1$'' under flat ring homomorphisms.

\begin{corollary}\label{4.3}
Let $(R,m,k)\longrightarrow (S,n,l)$ be a flat ring homomorphism of
local Noetherian rings. Then $S$ is of type $1$ if and only if so
are $R$ and $\displaystyle {\frac S{mS}}$.
\end{corollary}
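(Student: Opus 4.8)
The plan is to deduce the statement directly from Theorem \ref{4.1}, which reduces it to an elementary observation about positive integers. The only input needed beyond Theorem \ref{4.1} is that the type $r_A(A)$ of a Noetherian local ring $A$ is a \emph{positive} integer, that is, $1\le r_A(A)<+\infty$. Granting this, the multiplicative formula of Theorem \ref{4.1} forces the equivalence immediately.

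First I would recall why the type is finite and nonzero. For a Noetherian local ring $A$ with residue field $\kappa$ and $t:=\depth(A)$, the module $\Ext_A^t(\kappa,A)$ is a finitely generated $A$-module annihilated by the maximal ideal, hence a finite-dimensional $\kappa$-vector space; thus $r_A(A)=\dim_\kappa\Ext_A^t(\kappa,A)<+\infty$. Moreover, by the characterization of depth as the least index $i$ with $\Ext_A^i(\kappa,A)\neq 0$, the module $\Ext_A^t(\kappa,A)$ is nonzero, so $r_A(A)\ge 1$. Applying this to $A=R$, $A=S$ and $A=\frac S{mS}$, all three types are integers that are at least $1$.

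Next I would apply Theorem \ref{4.1} to the flat local homomorphism $(R,m,k)\longrightarrow (S,n,l)$, obtaining
$$r_S(S)\;=\;r_R(R)\cdot r_{\frac S{mS}}\Big(\frac S{mS}\Big) \ .$$
Since the three quantities involved are positive integers, the product on the right equals $1$ if and only if each factor equals $1$: if $r_R(R)=r_{\frac S{mS}}(\frac S{mS})=1$ then clearly $r_S(S)=1$, and conversely a product of integers each at least $1$ can equal $1$ only when every factor is exactly $1$. This is precisely the assertion that $S$ is of type $1$ if and only if both $R$ and $\frac S{mS}$ are of type $1$. There is no genuine obstacle here, as the content is carried entirely by Theorem \ref{4.1}; the only point requiring a moment's care is confirming the positivity $r_A(A)\ge 1$ via the $\Ext$-characterization of depth, without which the final integer argument would fail.
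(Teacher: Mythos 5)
Your proof is correct and follows essentially the same route as the paper, which simply declares the corollary ``straightforward from Theorem \ref{4.1}''. Your added justification that each type is a positive integer (finiteness from the finitely generated $\kappa$-vector space structure of $\Ext_A^t(\kappa,A)$, and nonvanishing from the $\Ext$-characterization of depth) is exactly the implicit point that makes the multiplicative formula yield the equivalence.
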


\begin{proof}
The proof is straightforward from Theorem \ref{4.1}.
\end{proof}

Next, we announce the main theorem of this section. It computes the
type of a local tensor product of algebras over a ring $R$ in terms
of the types of their components. Given a ring $A$ and when no
confusion is likely, we denote by $r(A)$ the type $r_A(A)$ of $A$
over itself.

\begin{thm}\label{4.4} Let $A$ and $B$ be $R$-algebras. Let $P$ be a prime
ideal of $A\otimes_RB$ and let $n:=P\cap R$, $p:=P\cap A$ and
$q:=P\cap B$. Assume that $A_p$ is flat over $R_n$. Then
$$r((A\otimes_RB)_P) \;=\;
r\Big(\dfrac{A_p}{nA_p}\Big)\cdot r(B_q) \;=\;
\dfrac{r(A_p)\cdot r(B_q)}{r(R_n)} \ .$$
\end{thm}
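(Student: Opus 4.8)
The plan is to feed the function $\lambda = \ln\circ r$ into Theorem \ref{2.3}. By Corollary \ref{4.2} this function lies in $\Lambda$, the point of passing to the logarithm being exactly that it turns the multiplicative behaviour of the type (Theorem \ref{4.1}) into the additive behaviour demanded by membership in $\Lambda$. Writing $D := (k_A(p)\otimes_{k_R(n)}k_B(q))_{P(k_A(p)\otimes_{k_R(n)}k_B(q))}$ for the relevant localization of the fiber, Theorem \ref{2.3} gives
$$\ln r((A\otimes_RB)_P) \;=\; \ln r\Big(\frac{A_p}{nA_p}\Big) + \ln r(B_q) + \ln r(D) \ .$$
The decisive observation is that the final summand vanishes: by \cite[Theorem 6]{TY} the ring $D$ is Gorenstein, hence Cohen-Macaulay of type $1$ by \cite[Theorem 3.2.10]{BH}, so $r(D)=1$ and $\ln r(D)=0$.

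Since the type of a Noetherian local ring is always a finite positive integer, $\ln r$ is everywhere finite and the displayed identity contains no indeterminate expressions; exponentiating the resulting equation $\ln r((A\otimes_RB)_P) = \ln\big(r(A_p/nA_p)\cdot r(B_q)\big)$ then yields the first equality
$$r((A\otimes_RB)_P) \;=\; r\Big(\frac{A_p}{nA_p}\Big)\cdot r(B_q) \ .$$

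For the second equality I would invoke Theorem \ref{4.1} directly. By commutativity of the structure diagram one has $p\cap R = (P\cap A)\cap R = P\cap R = n$, so the flatness of $A_p$ over $R_n$ makes $R_n \longrightarrow A_p$ a flat local homomorphism of Noetherian local rings with closed fiber $A_p/nA_p$. Theorem \ref{4.1} reads $r(A_p) = r(R_n)\cdot r(A_p/nA_p)$, whence $r(A_p/nA_p) = r(A_p)/r(R_n)$. Substituting this into the first equality produces
$$r((A\otimes_RB)_P) \;=\; \frac{r(A_p)\cdot r(B_q)}{r(R_n)} \ ,$$
which completes the argument.

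Thus the body of the proof is mechanical once Theorem \ref{2.3}, Corollary \ref{4.2} and Theorem \ref{4.1} are available. The one step deserving genuine care—and the only place I would expect any friction—is the logarithmic passage: one must be sure that $r$ is finite and strictly positive on every ring occurring, so that $\ln r$ is a bona fide finite, additive invariant for which Theorem \ref{2.3} is applicable and for which exponentiation faithfully recovers a multiplicative identity. The vanishing $\ln r(D)=0$ rests entirely on the Gorensteinness of the fiber $D$, which is the sole input of substance drawn from outside the present chain of results.
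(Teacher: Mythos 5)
Your proposal is correct and follows essentially the same route as the paper: apply Theorem \ref{2.3} to the invariant $\ln\circ\rr\in\Lambda$ from Corollary \ref{4.2}, use the Gorensteinness (hence type $1$) of the localized fiber to kill the third term, and invoke Theorem \ref{4.1} for the second equality. Your extra remark on the finiteness and positivity of the type, justifying the passage through the logarithm, is a sensible precaution but does not change the argument.
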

\begin{proof}
The proof follows from Theorem \ref{4.1}, Theorem \ref{2.3} and Corollary
\ref{4.2}, noting that $(k_A(p)\otimes_{k_R(n)}k_B(q))_{Pk_A(p)\otimes_{k_R(n)}k_B(q)}$
is a Gorenstein local ring and thus of type $1$.
\end{proof}

Now, we record when a local tensor product of algebras over
a ring behaves nicely with respect to Gorensteiness, namely when it
is of type $1$.

\begin{corollary}\label{4.5}
Let $A$ and $B$ be $R$-algebras. Let $P$ be a prime
ideal of $A\otimes_RB$ and let $n:=P\cap R$, $p:=P\cap A$ and
$q:=P\cap B$. Assume that $A_p$ is flat over $R_n$. Then
$(A\otimes_RB)_P$ is of type $1$ if and only if $\displaystyle
{\frac {A_p}{nA_p}}$ and $B_q$ are of type $1$.
\end{corollary}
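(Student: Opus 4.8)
The plan is to deduce this directly from the type formula established in Theorem \ref{4.4}, reducing the corollary to an elementary observation about positive integers. The one preliminary fact I would isolate first is that the type of a nonzero Noetherian local ring $S$ with residue field $l$ is always a strictly positive integer. Indeed, $\depth(S)$ is by definition the least integer $t$ with $\Ext^t_S(l,S)\neq 0$, so for $t=\depth(S)$ the module $\Ext^t_S(l,S)$ is nonzero and its $l$-dimension $r(S)$ satisfies $r(S)\geq 1$. This positivity is what makes the whole argument work, so I would state it explicitly at the outset.

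With this in hand the argument is immediate. By Theorem \ref{4.4} we have the equality of positive integers
$$r((A\otimes_RB)_P)\;=\;r\Big(\frac{A_p}{nA_p}\Big)\cdot r(B_q) \ .$$
Hence $(A\otimes_RB)_P$ is of type $1$, that is $r((A\otimes_RB)_P)=1$, if and only if the product on the right equals $1$. Since each factor is a positive integer, a product equal to $1$ forces both factors to be $1$; conversely, if $r\big(A_p/nA_p\big)=1$ and $r(B_q)=1$, their product is $1$. This yields exactly the stated equivalence, so the corollary is a formal consequence of the multiplicativity of the type.

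The only point that genuinely requires care — and it is the sole potential obstacle — is ensuring that all three local rings in play are nonzero, so that their types are defined, finite, and positive. Here $B_q$ is a localization of $B$ at a prime and is therefore a nonzero local ring, while the fiber $A_p/nA_p$ is nonzero because $n\subseteq p$ gives $nA_p\subseteq pA_p$, leaving $pA_p/nA_p$ as a proper maximal ideal; the flatness of $A_p$ over $R_n$ assumed in Theorem \ref{4.4} is precisely what keeps this fiber well-behaved. Once these nonvanishing checks are recorded, no infinite values or cancellations can intervene in the product, and nothing further beyond Theorem \ref{4.4} is needed.
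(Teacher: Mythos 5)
Your proof is correct and follows essentially the same route as the paper, which simply declares the corollary ``clear from Theorem \ref{4.4}'': the product formula $r((A\otimes_RB)_P)=r\bigl(\frac{A_p}{nA_p}\bigr)\cdot r(B_q)$ together with the positivity of the type immediately gives the equivalence. Your explicit verification that the types involved are positive integers (and that the rings in question are nonzero) is exactly the elementary observation the paper leaves implicit.
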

\begin{proof}
It is clear from Theorem \ref{4.4}.
\end{proof}

Our last results of this section determine the type of local tensor
products of algebras over a field $k$ and examine when these
constructions are of type $1$.

\begin{corollary}\label{4.6}
Let $k$ be a field and let $A$ and $B$ be $k$-algebras. Let $P$ be a prime
ideal of $A\otimes_kB$. Let $p:=P\cap A$ and $q:=P\cap B$. Then
$$r((A\otimes_kB)_P)\;=\;r(A_p)\cdot r(B_q) \ .$$
\end{corollary}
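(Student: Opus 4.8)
The plan is to deduce this directly from Theorem \ref{4.4} by specializing the base ring $R$ to the field $k$. The first thing I would verify is that all the hypotheses of that theorem are automatically met in the present situation. Taking $R=k$, the only prime ideal of $k$ is the zero ideal, so for any $P\in\Spec(A\otimes_kB)$ we have $n:=P\cap k=(0)$, whence $R_n=k_{(0)}=k$. Since $k$ is a field, every $k$-module is free and in particular $A_p$ is flat over $R_n=k$; thus the flatness assumption of Theorem \ref{4.4} is satisfied with no additional condition imposed.

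Next I would reduce the two factors appearing in the formula of Theorem \ref{4.4}. Because $n=(0)$, we have $nA_p=0$ and therefore $\dfrac{A_p}{nA_p}=A_p$, so the factor $r\big(\tfrac{A_p}{nA_p}\big)$ collapses to $r(A_p)$. It then remains only to evaluate the denominator $r(R_n)=r(k)$ occurring in the second expression of Theorem \ref{4.4}.

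The single computation worth recording is that a field has type $1$. Indeed, viewing $k$ as a local ring it has depth $0$, so $r(k)=\dim_k\Ext_k^0(k,k)=\dim_k\Hom_k(k,k)=\dim_k k=1$; equivalently, $k$ is a zero-dimensional Gorenstein local ring and hence of type $1$. Substituting $n=(0)$, $R_n=k$, $\dfrac{A_p}{nA_p}=A_p$ and $r(k)=1$ into the chain of equalities of Theorem \ref{4.4} yields
$$r((A\otimes_kB)_P)\;=\;r(A_p)\cdot r(B_q)\;=\;\frac{r(A_p)\cdot r(B_q)}{1}\;=\;r(A_p)\cdot r(B_q),$$
which is exactly the desired identity. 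As this is a pure specialization of the preceding theorem, I do not anticipate any genuine obstacle; the only points that merit an explicit remark are the automatic flatness of $A_p$ over the field $R_n=k$ and the evaluation $r(k)=1$, both of which make the correction term $r(R_n)$ disappear.
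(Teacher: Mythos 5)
Your proof is correct and follows exactly the route the paper intends: Corollary \ref{4.6} is stated as an immediate specialization of Theorem \ref{4.4} to $R=k$, where flatness of $A_p$ over $k$ is automatic, $nA_p=0$, and $r(k)=1$. Nothing further is needed.
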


\begin{corollary}\label{4.7}
 Let $k$ be a field and let $A$ and $B$ be $k$-algebras. Let $P$ be a prime
ideal of $A\otimes_kB$. Let $p:=P\cap A$ and $q:=P\cap B$. Then
$(A\otimes_kB)_P$ is of type $1$ if and only if $A_p$ and $B_q$ are
of type $1$.
\end{corollary}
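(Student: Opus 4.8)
The plan is to reduce the entire statement to the multiplicative formula already in hand. Since $k$ is a field, every $k$-algebra is automatically flat over $k$, so the hypotheses of Theorem \ref{4.4} are satisfied for $R=k$ with $n=P\cap k=0$, and Corollary \ref{4.6} gives
$$r((A\otimes_kB)_P)\;=\;r(A_p)\cdot r(B_q) \ .$$
Thus the claim is purely a question about when a product of two types equals $1$, and no further structure of the tensor product is needed beyond this identity.

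The key fact I would invoke is that the type of any nonzero Noetherian local ring is a positive integer. Concretely, for a local ring $(S,\mathfrak{s},\ell)$ with $t=\depth_S(S)$, the module $\Ext_S^t(\ell,S)$ is finitely generated and annihilated by $\mathfrak{s}$, hence is a finite-dimensional $\ell$-vector space; moreover it is nonzero precisely because $t$ is the least index $i$ with $\Ext_S^i(\ell,S)\neq 0$. Consequently $r_S(S)\geq 1$ always, so in particular both $r(A_p)$ and $r(B_q)$ are integers that are at least $1$.

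With these two ingredients the conclusion is formal: a product of two integers, each $\geq 1$, equals $1$ if and only if both factors equal $1$. Hence $r((A\otimes_kB)_P)=1$ if and only if $r(A_p)=1$ and $r(B_q)=1$, which is exactly the assertion that $(A\otimes_kB)_P$ is of type $1$ if and only if $A_p$ and $B_q$ are of type $1$.

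I do not expect any genuine obstacle in this argument; the only point that requires care is the positivity $r\geq 1$, since it is precisely this inequality that forbids a degenerate factorization of $1$ into factors other than $1$. Once the multiplicativity of Corollary \ref{4.6} and the bound $r\geq 1$ are recorded, the corollary follows immediately.
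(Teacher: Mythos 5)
Your proposal is correct and follows the same route as the paper: the corollary is an immediate consequence of the multiplicative formula $r((A\otimes_kB)_P)=r(A_p)\cdot r(B_q)$ of Corollary \ref{4.6} together with the fact that the type of a Noetherian local ring is a positive integer, which you justify correctly via Rees's characterization of depth as the least $i$ with $\Ext^i_S(\ell,S)\neq 0$. The paper leaves this step implicit (as it does for the analogous Corollary \ref{4.5}), so your write-up simply makes explicit exactly the argument intended.
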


\section{Introduction to Andr\'{e}-Quillen homology}

This section represents a short introduction to Andr\'e-Quillen
homology. If $A\longrightarrow B$ is a ring homomorphism and $M$ is a
$B$-module, then $H_n(A,B,M)$ denotes the Andr\'{e}-Quillen homology
$B$-module for each $n\geq 0$ \cite{An}. In the following, we list the basic
properties of these homology modules used in the remaining sections.

\begin{properties}
\textbf{(i)} (Functoriality) They are natural in all three variables
$A$, $B$ and $M$.\\* \textbf{(ii)} If $A\longrightarrow B$ is an isomorphism,
then $H_n(A,B,M)=0$ for all $n\geq 0$. If $B\cong \displaystyle {\frac AI}$, then
$H_0(A,B,M)=0$ and $$H_1(A,B,M)\;\cong\; \displaystyle {\frac I{I^2}}\otimes_{B}M \ .$$ In
particular, if $(A,m,k)$ is a Noetherian local ring, $\dim_F
H_1(A,k,F)=\embdim(A)$ for any field extension $F|k$.\\*
\textbf{(iii)} (Base change) Let $A\longrightarrow B$, $A\longrightarrow C$ be ring
homomorphisms and $t$ an integer such that $\Tor_n^A(B,C)=0$ for all
$0<n<t$. Then for any $B\otimes_AC$-module $M$, the canonical
homomorphisms
$$H_n(A,B,M)\,\longrightarrow\, H_n(C,B\otimes_AC,M)$$
are isomorphisms for all $n<t$. \\* \textbf{(iv)} Let $A\longrightarrow B\longrightarrow C$
be ring homomorphisms and $M$ a flat $C$-module. Then
$$H_n(A,B,M)\;\cong\; H_n(A,B,C)\otimes_CM $$ for all $n$.\\*
\textbf{(v)} (Jacobi-Zariski exact sequence) Let $A\longrightarrow B\longrightarrow C$ be
ring homomorphisms and $M$ a $C$-module. Then there exist a natural
exact sequence
\begin{align*}
  \cdots \; &\longrightarrow \; H_n(A,B,M)    \; \longrightarrow \;     H_n(A,C,M)   \; \longrightarrow \;    H_n(B,C,M)   \; \longrightarrow \;  \\
  &\longrightarrow \; H_{n-1}(A,B,M) \; \longrightarrow \;   \cdots\\
  \cdots \;  &\longrightarrow \; H_0(A,B,M)    \; \longrightarrow \;     H_0(A,C,M)   \; \longrightarrow \;    H_0(B,C,M)   \; \longrightarrow \;  0
\end{align*}
\textbf{(vi)} (Tensor product) Let $B$ and $C$ be $A$-algebras and
$t$ an integer such that $\Tor_n^A(B,C)=0$ for all $0<n<t$. Then for
any $B\otimes_AC$-algebra $D$ and any $D$-module $M$ we have a
natural exact sequence
\begin{align*}
& H_t(B\otimes_AC,D,M)    \; \longrightarrow \; \\
\longrightarrow\; H_{t-1}(A,D,M)   \; \longrightarrow \;    H_{t-1}(B,D,M)\oplus H_{t-1}(C,D,M)   \; \longrightarrow \; & H_{t-1}(B\otimes_AC,D,M) \;\longrightarrow \cdots \\
\cdots \; \longrightarrow \; & H_0(B\otimes_AC,D,M) \;
\longrightarrow \;  0
\end{align*}
\textbf{(vii)} (Field extensions) If $L| K$ is a field extension and
$M$ an $L$-module, then $H_n(K,L,M)=0$ for all $n\geq 2$. Moreover,
$L| K$ is separable if and only if $H_1(K,L,M)=0$ for some
(equivalently any) $L$-module $M\neq 0$. Therefore if $A\longrightarrow K\longrightarrow L$
are ring homomorphisms with $K$ and $L$ fields, then by (v)
$H_n(A,K,M)\longrightarrow H_n(A,L,M)$ is an isomorphism for all $n\geq 2$ and
injective for $n=1$. If $L\mid K$ is separable, it is also an
isomorphism for $n=1$.\\* \textbf{(viii)} (Localization) Let
$f\colon  A \longrightarrow B$ be a ring homomorphism and let $S\subset A$,
$T\subset B$ be multiplicatively closed subsets such that
$f(S)\subset T$. Let $M$ be a  $B$-module, then
\begin{align*}
T^{-1}H_n(A,B,M) \;&\cong\; H_n(A,B,T^{-1}M)\;\cong\; H_n(A,T^{-1}B,T^{-1}M)\\
\;&\cong\; H_n(S^{-1}A,T^{-1}B,T^{-1}M)
\end{align*}
for all $n\geq 0$. \\* \textbf{(ix)} (Finiteness) If $B$ an
$A$-algebra of finite type and $M$ a $B$-module of finite type, then
$H_n(A,B,M)$ is a $B$-module of finite type for all $n\geq0$.\\*
\textbf{(x)} (Vanishing) A Noetherian local ring $(A,m,k)$ is regular
if and only if $H_2(A,k,k)=0$ if and only if $H_n(A,k,k)=0$ for all
$n\geq 2$. If $A$ is a ring and $(B,n,l)$ a local $A$-algebra, then
$B$ is formally smooth for the $n$-adic topology
\cite[Premi\`ere Partie, $0_{\text{IV}}$.19.3.1]{EGA4} if and only if
$H_1(A,B,l)=0$.\\* \textbf{(xi)} (Flat extensions) If $(A,m,k)\longrightarrow
(B,n,l)$ is a local flat homomorphism of local rings, then the
homomorphism $H_2(A,k,l)\longrightarrow  H_2(B,l,l)$ is injective.\\*
\textbf{(xii)} (Completions) Let $(A,m ,k)$ and $(B,n ,l)$ be two
local rings with their completions $\hat{A}$ and $\hat{B}$. Then
$$H_n(A,k,k)\;\cong\; H_n(\hat{A},k,k) \qquad\text{ and }\qquad  H_n(A,B,l)\;\cong\; H_n(\hat{A},\hat{B},l)$$ for all $n$.

\end{properties}
\begin{proof}
(i) See \cite[3.15]{An}.\\*
(ii) See \cite[4.43, 4.60, 6.1]{An}.\\*
(iii) See \cite[9.31]{An}. \\*
(iv) See \cite[3.20]{An}.\\*
(v) See \cite[5.1]{An}.\\*
(vi) Same proof than in \cite[5.21]{An}, but using \cite[9.31]{An}
instead of \cite[4.54]{An}.\\*
(vii) See \cite[7.4, 7.13]{An}.\\*
(viii) The first isomorphism is \cite[4.59]{An} and the last two
follow from \cite[5.27]{An}.\\* 
(ix) See \cite[4.55]{An}.\\* 
(x) \cite[6.26]{An} gives the first claim. The second follows from
\cite[16.17, 3.20, 3.21]{An}.\\*
(xi) See \cite[Remarks (1.4)]{Av}.\\* 
(xii) The first isomorphism is
\cite[10.18]{An} and the second one is an immediate consequence of
the first (a reference for this isomorphism is \cite[Lemma 1]{FR}).
\end{proof}

\section{Complete intersection}

The aim of this section is to evaluate the complete
intersection defect of the tensor product of two algebras over a ring.

Let $(S,m,l)$ be a local ring. The \textit{complete intersection
defect} of $S$ \cite{KK} is defined as
$$\dd(S)\;:=\;\varepsilon_2(S)-\embdim(S)+\dim(S),$$ where $\varepsilon_2(S)$
denotes $\dim_lH_2(S,l,l)$, which is finite by Property (ix). It is remarkable that, by
\cite[15.12]{An} (see also \cite[2.5.1]{MR}), $\varepsilon_2(S)$
coincides with the dimension of the first Koszul homology module, denoted by $\dim
H_1(m)$, associated to a minimal set of generators of the ideal $m$.
Also, notice that $\dd(S)\geq 0$, and $\dd(S)=0$ if and only if $S$
is complete intersection \cite[p. 349]{KK} (or in terms of
Andr\'e-Quillen homology see \cite[4.3.4, 4.3.5]{MR}).

Recall that, in \cite[Theorem
2]{M}, it is proved the following theorem on
complete intersection of tensor products: \emph{Let $A$ and $B$ two
$R$-algebras such that for each
maximal ideal $P$ of $A\otimes_RB$, with $n=P\cap R$, $p=P\cap A$
and $q=P\cap B$, $A_p$ or $B_q$ is flat over $R_n$. If $A$ and $B$
are complete intersection then so is $A\otimes_RB$}.
Our next theorem generalizes this result by evaluating the
complete intersection defect of such constructions.

\begin{thm}\label{6.1}
Let $A$ and $B$ be $R$-algebras. Let $P\in\Spec(A\otimes_RB)$ with $p:=P\cap A$,
$q:=P\cap B$ and $n:=P\cap R$. Assume that $A_p$ is flat over $R_n$.
Then $$d((A\otimes_RB)_P)\;=\;d(A_p)+d(B_q)-d(R_n) \ .$$\end{thm}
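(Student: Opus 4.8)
My plan is to deduce the formula almost entirely from machinery already in place, reducing everything to a single nontrivial point about residue tensor products of fields. First I would invoke Proposition \ref{2.2}(5) together with Corollary \ref{2.5}: the complete intersection defect $d$ lies in $\Lambda = \Lambda'$. Since $A_p$ is flat over $R_n$, the defining relation of $\Lambda'$ then gives
$$d(R_n) + d((A\otimes_RB)_P) = d(A_p) + d(B_q) + d\big((k_A(p)\otimes_{k_R(n)}k_B(q))_{P(k_A(p)\otimes_{k_R(n)}k_B(q))}\big).$$
Because $\embdim$, $\dim$ and $\varepsilon_2$ are all finite on Noetherian local rings (the last by Property (ix)), every term here is a finite integer, so the rearrangement below is legitimate. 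Thus the claimed identity $d((A\otimes_RB)_P) = d(A_p)+d(B_q)-d(R_n)$ is exactly equivalent to the vanishing of the last summand, i.e. to the statement that the fibre ring $D := (k_A(p)\otimes_{k_R(n)}k_B(q))_{P(\ldots)}$ is a complete intersection.

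So the entire content of the theorem rests on this one lemma, and this is where I would use Andr\'e--Quillen homology. Writing $k := k_R(n)$, $K := k_A(p)$, $L := k_B(q)$, $C := K\otimes_k L$, and $D := C_P$ with residue field $l'$ (an extension field of $L$), I would aim to show $H_n(D,l',l') = 0$ for all $n\geq 3$. Since $K$ is flat over the field $k$, one has $\Tor^k_n(K,L) = 0$ for $n\geq 1$, so the base change isomorphism (Property (iii)) yields $H_n(k,K,M)\cong H_n(L,C,M)$ for every $C$-module $M$ and all $n$; as $K\mid k$ is a field extension, Property (vii) makes the left side vanish for $n\geq 2$, giving $H_n(L,C,M) = 0$ for $n\geq 2$. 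Localizing at $P$ (Property (viii)) I obtain $H_n(L,D,l') = 0$ for $n\geq 2$. Finally I would feed $L\to D\to l'$ into the Jacobi--Zariski sequence (Property (v)) with coefficients in $l'$; since $l'\mid L$ is a field extension, $H_n(L,l',l') = 0$ for $n\geq 2$ by Property (vii), and for each $n\geq 3$ the fragment
$$H_n(L,l',l') \longrightarrow H_n(D,l',l') \longrightarrow H_{n-1}(L,D,l')$$
has both outer terms zero, forcing $H_n(D,l',l') = 0$.

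From $H_n(D,l',l') = 0$ for $n\geq 3$ I would conclude that $D$ is a complete intersection, hence $d(D) = 0$, which finishes the proof. The hard part is precisely this last implication: deducing the complete intersection property from the vanishing of cotangent homology in degrees $\geq 3$ is the analogue, for complete intersections, of the regularity criterion recorded in Property (x), but a considerably deeper statement (the rigidity of Andr\'e--Quillen homology, in the circle of results around Quillen's conjecture). It is exactly this step that forces the use of Andr\'e--Quillen homology here, rather than the elementary $\Lambda$-calculus sufficient in the earlier sections; by contrast, the reduction to the fibre and the fibre computation itself are routine once base change, localization and the Jacobi--Zariski sequence are available.
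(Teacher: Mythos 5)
Your argument is correct, and its skeleton is exactly the paper's: Proposition \ref{2.2}(5) and Corollary \ref{2.5} reduce Theorem \ref{6.1} to showing that $\dd$ vanishes on the localized fibre $D=(k_A(p)\otimes_{k_R(n)}k_B(q))_{P(k_A(p)\otimes_{k_R(n)}k_B(q))}$, i.e.\ that $D$ is a complete intersection. Where you diverge is in how that single lemma is handled. The paper's proof is one line: it simply asserts that $D$ is a complete intersection, this being the special case (two field extensions over a field) of \cite[Theorem 2]{M}, quoted verbatim just before the theorem, and going back to \cite[Theorem 2, p. 417]{WITO}. You instead prove the lemma from scratch: base change (Property (iii)) applied to the flat $k_R(n)$-algebra $k_A(p)$, the field-extension vanishing (Property (vii)), localization (Property (viii)) and the Jacobi--Zariski sequence (Property (v)) correctly yield $H_n(D,l',l')=0$ for all $n\geq 3$, and you then conclude that $D$ is a complete intersection. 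That last inference is indeed the only deep input, but you slightly misplace it: it is not part of the rigidity circle around Quillen's conjecture, it is the classical Lichtenbaum--Schlessinger/Andr\'e characterization of complete intersections by the vanishing of $H_3$, which is precisely what the paper already cites as \cite[4.3.4, 4.3.5]{MR} when recording that $\dd(S)=0$ if and only if $S$ is a complete intersection. So your route is a legitimate, self-contained replacement for the citation of \cite[Theorem 2]{M}: it buys independence from that external result at the price of invoking the $H_3$-criterion, which the $\Lambda$-calculus portion of the argument does not otherwise need. Your observation that all terms in the $\Lambda'$ identity are finite, so the rearrangement is licit, is a point the paper glosses over and is worth keeping.
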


\begin{proof} It is immediate from Proposition \ref{2.2} and Corollary \ref{2.5} as\\
$(k_A(p)\otimes_{k_R(n)}k_B(q))_{P(k_A(p)\otimes_{k_R(n)}k_B(q))}$
is complete intersection.
\end{proof}

Next, we isolate the case of tensor products of algebras over a field.

\begin{corollary} Let $k$ be a field and let $A$ and $B$ be $k$-algebras. Let $P\in\Spec(A\otimes_kB)$ with $p:=P\cap A$ and
$q:=P\cap B$. 
Then $$\dd((A\otimes_kB)_P)\;=\;\dd(A_p)+\dd(B_q) \ .$$\end{corollary}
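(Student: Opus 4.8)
The plan is to derive this corollary as the special case $R=k$ of Theorem \ref{6.1}, so the entire argument is one of specialization. First I would record that when the base ring is a field $k$, every $k$-module is free and hence flat; consequently the standing hypothesis of Theorem \ref{6.1} that $A_p$ be flat over $R_n$ is automatically fulfilled and imposes no restriction. Thus Theorem \ref{6.1} applies verbatim to every $P\in\Spec(A\otimes_kB)$.

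Next I would identify the contracted prime $n:=P\cap k$. Since $k$ is a field its only prime ideal is $(0)$, so $n=(0)$ and $R_n=k_{(0)}=k$. Writing $p:=P\cap A$ and $q:=P\cap B$, the formula of Theorem \ref{6.1} then specializes to
$$\dd((A\otimes_kB)_P)\;=\;\dd(A_p)+\dd(B_q)-\dd(k) \ .$$

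It remains only to evaluate $\dd(k)$. A field is regular, hence in particular a complete intersection, so by the characterization recalled just before Theorem \ref{6.1}, namely that $\dd(S)=0$ if and only if $S$ is complete intersection, we get $\dd(k)=0$. Substituting this vanishing into the displayed equation gives exactly $\dd((A\otimes_kB)_P)=\dd(A_p)+\dd(B_q)$, as asserted.

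Since the proof is a pure specialization, I do not anticipate any genuine difficulty; the only two points meriting explicit mention are the automatic flatness of $A_p$ over the field $R_n=k$ and the vanishing $\dd(k)=0$, and both are immediate from standard facts already in hand.
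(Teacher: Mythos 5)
Your proof is correct and is exactly the argument the paper intends (the paper leaves this corollary without an explicit proof, as the specialization $R=k$ of Theorem \ref{6.1}): flatness over the field $k$ is automatic, $n=(0)$ so $R_n=k$, and $\dd(k)=0$ since a field is a complete intersection (equivalently, since $\dd\in\Lambda$ vanishes on fields by Proposition \ref{2.2}). Nothing is missing.
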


The next corollaries characterizes when the tensor product of algebras over a ring is complete intersection.

\begin{corollary} Let $A$ and $B$ be $R$-algebras. Let $P\in\Spec(A\otimes_RB)$ with $p:=P\cap A$,
$q:=P\cap B$ and $n:=P\cap R$. Assume that $A_p$ is flat over $R_n$.
Then the following assertions are equivalent:
  
(1) $(A\otimes_RB)_P$ is complete intersection;

(2) $\dfrac{A_p}{nA_p}$ and $B_q$ are complete intersections;

(3) $B_q$ is complete intersection and $\dd(A_p)=\dd(R_n)$.
\end{corollary}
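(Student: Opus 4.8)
The plan is to derive this corollary as a direct consequence of the complete intersection defect formula just established in Theorem~\ref{6.1}, exactly as the author does for the Cohen-Macaulayness characterization in Corollary~\ref{2.9}. The key observation is that $\dd(S)=0$ if and only if $S$ is complete intersection, together with the non-negativity $\dd(S)\geq 0$ for every local ring $S$; both facts are recorded in the discussion preceding Theorem~\ref{6.1}. The strategy is therefore to rewrite the equivalences in terms of the vanishing of the various complete intersection defects and exploit that a sum of non-negative quantities vanishes precisely when each summand does.

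First I would establish the implication $(1)\Leftrightarrow(2)$. By definition $(A\otimes_RB)_P$ is complete intersection if and only if $\dd((A\otimes_RB)_P)=0$. Applying Theorem~\ref{6.1} in the form
$$\dd((A\otimes_RB)_P)\;=\;\dd\Big(\dfrac{A_p}{nA_p}\Big)+\dd(B_q),$$
which is the second expression appearing in the proof of Theorem~\ref{6.1} via the flat base change $R_n\longrightarrow A_p$ (so that $\dd(A_p)-\dd(R_n)=\dd(A_p/nA_p)$ by Proposition~\ref{2.2}(5) and Corollary~\ref{2.5}), the left side vanishes if and only if both $\dd(A_p/nA_p)=0$ and $\dd(B_q)=0$, since each term is non-negative. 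This is precisely the statement that $A_p/nA_p$ and $B_q$ are complete intersections.

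Next I would handle $(2)\Leftrightarrow(3)$. Here the point is simply that, by Proposition~\ref{2.2}(5) and Corollary~\ref{2.5}, the defect is additive along the flat homomorphism $R_n\longrightarrow A_p$, giving $\dd(A_p)=\dd(R_n)+\dd(A_p/nA_p)$. Since $\dd(A_p/nA_p)\geq 0$, the condition $\dd(A_p/nA_p)=0$ is equivalent to $\dd(A_p)=\dd(R_n)$, which converts the statement ``$A_p/nA_p$ is complete intersection'' into ``$\dd(A_p)=\dd(R_n)$''; the condition on $B_q$ is unchanged. Combining the two equivalences completes the argument.

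I expect essentially no genuine obstacle here, as the corollary is purely formal once Theorem~\ref{6.1} is in hand; the only point requiring mild care is the bookkeeping translating between the two forms of the defect formula (the one involving $\dd(A_p)-\dd(R_n)$ and the one involving $\dd(A_p/nA_p)$), which relies on the additivity of $\dd$ under flat local homomorphisms guaranteed by Proposition~\ref{2.2}(5). The whole proof can be stated in one or two sentences: it is direct from Theorem~\ref{6.1} together with $\dd\geq 0$ and the characterization $\dd=0 \iff$ complete intersection.
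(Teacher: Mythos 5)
Your proposal is correct and follows exactly the paper's route: the paper's own proof is the one-liner that Theorem~\ref{6.1} combined with additivity of $\dd$ along the flat map $R_n\longrightarrow A_p$ gives $\dd((A\otimes_RB)_P)=\dd\bigl(\frac{A_p}{nA_p}\bigr)+\dd(B_q)$, after which non-negativity of $\dd$ and the characterization $\dd=0\iff$ complete intersection yield all three equivalences. Your write-up just makes the bookkeeping explicit.
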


\begin{proof} It suffices to note, by Theorem \ref{6.1}, that $\dd((A\otimes_RB)_P)=\dd\big(\dfrac{A_p}{nA_p}\big)+\dd(B_q)$.
\end{proof}

\begin{corollary} Let $A$ and $B$ be $R$-algebras. Let $P\in\Spec(A\otimes_RB)$ with $p:=P\cap A$,
$q:=P\cap B$ and $n:=P\cap R$. Assume that $A_p$ and $B_q$ are flat over $R_n$.
Then the following assertions are equivalent:
  
(1) $(A\otimes_RB)_P$ is complete intersection;

(2) $A_p$ and $B_q$ are complete intersections.\end{corollary}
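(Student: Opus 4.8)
The plan is to imitate the proof of the Cohen--Macaulay counterpart (Corollary \ref{2.10}), replacing the codepth by the complete intersection defect $\dd$ and the Cohen--Macaulay characterization by the corollary immediately above. The two facts I would lean on are that $\dd\in\Lambda$ (Proposition \ref{2.2}(5)), so that for the flat local homomorphisms $R_n\to A_p$ and $R_n\to B_q$ one has
$$\dd(A_p)\;=\;\dd(R_n)+\dd\Big(\frac{A_p}{nA_p}\Big),\qquad \dd(B_q)\;=\;\dd(R_n)+\dd\Big(\frac{B_q}{nB_q}\Big),$$
and that $\dd$ is a finite, nonnegative invariant vanishing exactly on complete intersections.

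For (2) $\Rightarrow$ (1) I would argue as follows. Since $A_p$ is complete intersection, $\dd(A_p)=0$; as $R_n\to A_p$ is flat and $\dd\in\Lambda$, the first displayed identity forces $\dd(A_p/nA_p)=0$, so that $A_p/nA_p$ is complete intersection. Together with the hypothesis that $B_q$ is complete intersection, the implication (2) $\Rightarrow$ (1) of the preceding corollary (which needs only that $A_p$ be flat over $R_n$) yields that $(A\otimes_RB)_P$ is complete intersection.

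For (1) $\Rightarrow$ (2) I would exploit the symmetry of the construction. Applying the preceding corollary with $A_p$ flat over $R_n$, the hypothesis that $(A\otimes_RB)_P$ is complete intersection gives at once that $B_q$ is complete intersection. Interchanging the roles of $A$ and $B$ through the canonical isomorphism $A\otimes_RB\cong B\otimes_RA$ (under which $P$ corresponds to a prime with the same contractions $p$, $q$, $n$) and using that $B_q$ is also flat over $R_n$, the same corollary then gives that $A_p$ is complete intersection.

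The argument is essentially formal once the preceding corollary is available; the only point deserving attention is the passage from $\dd(A_p)=0$ to $\dd(A_p/nA_p)=0$, which rests on the nonnegativity of every term in the flat--base--change identity for $\dd$. Alternatively, one could bypass the preceding corollary and read everything off Theorem \ref{6.1}: for (2) $\Rightarrow$ (1), flatness gives $\dd(R_n)=0$, whence $\dd((A\otimes_RB)_P)=\dd(A_p)+\dd(B_q)-\dd(R_n)=0$; for (1) $\Rightarrow$ (2), the relation $\dd(A_p)+\dd(B_q)=\dd(R_n)$ together with the inequalities $\dd(A_p)\geq\dd(R_n)$ and $\dd(B_q)\geq\dd(R_n)$ (again from the identities above) forces $\dd(A_p)=\dd(B_q)=0$.
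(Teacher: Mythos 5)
Your proposal is correct and follows essentially the same route as the paper: the paper's one-line proof ("if either $A_p$ or $B_q$ is complete intersection, then so is $R_n$") is precisely the descent step you justify via the identity $\dd(A_p)=\dd(R_n)+\dd\bigl(\frac{A_p}{nA_p}\bigr)$ and the nonnegativity of $\dd$, combined with the preceding corollary (or, equivalently, directly with Theorem \ref{6.1}). Your write-up just makes explicit what the paper leaves implicit.
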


\begin{proof} It is direct as if either $A_p$ or $B_q$ is complete intersection, then so is $R_n$.\end{proof}

Note that one may characterize when the tensor product $A\otimes_RB$ is complete intersection by using Proposition \ref{2.1} 
as done in Section 2 for Cohen-Macaulayness.

Let $(R,n , k_R(n))$ be a Noetherian local ring and $\hat{R}$ its completion. It is well known (see \cite{K}) that for any representation of $\hat{R}$ 
as $\hat{R}=\displaystyle {\frac SI}$, where S is a regular local ring and $I$ an ideal of $R$, we have
$$\dd(R) = \mu(I) - dim(S) + dim(R) \ .$$
Here,  $\mu(I)=\dim H_1(S,\hat{R},k_R(n))$ stands for the minimum number of generators of $I$ (Property (ii)).
$R$ is said to be an \textit{almost complete intersection} ring if $\dd(R) = 1$.

The following two corollaries characterize when the tensor product of two $R$-algebras is almost complete intersection.

\begin{corollary}\label{6.5}
Let $A$ and $B$ be $R$-algebras. Let $P\in\Spec(A\otimes_RB)$ with $p:=P\cap A$,
$q:=P\cap B$ and $n:=P\cap R$. Assume that $A_p$ is flat over
$R_n$. Then $(A\otimes_RB)_P$ is almost complete intersection if and only if one of the following assertions holds:

(1) $\displaystyle {\frac {A_p}{nA_p}}$ is complete intersection and $B_q$ is almost complete intersection.

(2) $\displaystyle {\frac {A_p}{nA_p}}$ is almost complete intersection and $B_q$ is complete intersection.
\end{corollary}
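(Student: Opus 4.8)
The plan is to reduce the whole statement to the additivity formula for the complete intersection defect that is isolated in the proof of the corollary immediately following Theorem \ref{6.1}. First I would recall that $\dd\in\Lambda$ by Proposition \ref{2.2}(5), so that Theorem \ref{6.1} together with the flatness of $R_n\to A_p$ gives $\dd(A_p)-\dd(R_n)=\dd\big(\frac{A_p}{nA_p}\big)$, and hence
$$\dd((A\otimes_RB)_P)\;=\;\dd\Big(\frac{A_p}{nA_p}\Big)+\dd(B_q) \ .$$
Since this is exactly the identity already recorded after Theorem \ref{6.1}, I may take it as given and treat it as the single input of the argument.

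Next I would invoke the two elementary facts about the defect noted at the start of this section: for every Noetherian local ring $S$ one has $\dd(S)\geq 0$, and $\dd(S)=0$ if and only if $S$ is complete intersection. By definition, $(A\otimes_RB)_P$ is almost complete intersection precisely when $\dd((A\otimes_RB)_P)=1$. Writing $a:=\dd\big(\frac{A_p}{nA_p}\big)$ and $b:=\dd(B_q)$, the displayed formula turns the claim into the purely arithmetic assertion that, for nonnegative integers $a$ and $b$, the equality $a+b=1$ holds if and only if $\{a,b\}=\{0,1\}$.

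The final step is to translate this dichotomy back into ring-theoretic language using $\dd=0\Leftrightarrow$ complete intersection and $\dd=1\Leftrightarrow$ almost complete intersection. The case $a=0$, $b=1$ says $\frac{A_p}{nA_p}$ is complete intersection while $B_q$ is almost complete intersection, which is assertion (1); the case $a=1$, $b=0$ gives assertion (2); and conversely each of (1), (2) forces $a+b=1$. There is no genuine obstacle here: the only subtlety worth flagging is that $\dd$ takes values in $\mathbb{N}$ on Noetherian local rings (this legitimizes the ``sum equals one'' argument), and that the additivity formula is applied in the form involving $\frac{A_p}{nA_p}$ and $B_q$ rather than $A_p$ and $R_n$, which is harmless since the two versions coincide by the flatness hypothesis.
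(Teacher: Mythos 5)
Your proposal is correct and follows essentially the same route as the paper: the printed proof likewise reduces everything to the identity $\dd((A\otimes_RB)_P)=\dd\bigl(\frac{A_p}{nA_p}\bigr)+\dd(B_q)$ obtained from Theorem \ref{6.1} and the $\Lambda$-property of $\dd$, and then concludes from $\dd\geq 0$ together with the characterizations $\dd=0$ (complete intersection) and $\dd=1$ (almost complete intersection). Your write-up merely makes explicit the arithmetic step $a+b=1\Leftrightarrow\{a,b\}=\{0,1\}$ that the paper leaves as ``clear.''
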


\begin{proof} It is clear as by Theorem \ref{6.1}, $\dd((A\otimes_RB)_P)=\dd\big(\displaystyle {\frac {A_p}{nA_p}}\big)+d(B_q)$.\end{proof}

Note that under the conditions of Corollary \ref{6.5} if $R_n$, $A_p$ and $B_q$ are almost complete intersection, then so is 
$(A\otimes_RB)_P$.

\begin{corollary}
Let $A$ and $B$ be $R$-algebras. Let $P\in\Spec(A\otimes_RB)$ with $p:=P\cap A$,
$q:=P\cap B$ and $n:=P\cap R$. Assume that $A_p$ and $B_q$ are flat over
$R_n$. Then $(A\otimes_RB)_P$ is almost complete intersection if and only if one of the following assertions holds:

(1) $R_n$, $A_p$ and $B_q$ are almost complete intersection rings.

(2) $R_n$ and $A_p$ are complete intersection and $B_q$ is almost complete intersection.

(3) $R_n$ and $B_q$ are complete intersection and $A_p$ is almost complete intersection.
\end{corollary}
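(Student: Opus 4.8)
The plan is to reduce the statement to the additivity of the complete intersection defect from Theorem \ref{6.1}, combined with the fact that $\dd\in\Lambda$ (Proposition \ref{2.2}(5)), and then to solve a small arithmetic problem. First I would apply Theorem \ref{6.1}, which is available since $A_p$ is flat over $R_n$, to get
$$\dd((A\otimes_RB)_P)\;=\;\dd(A_p)+\dd(B_q)-\dd(R_n) \ .$$
Because both $A_p$ and $B_q$ are flat over $R_n$, the localized homomorphisms $R_n\longrightarrow A_p$ and $R_n\longrightarrow B_q$ are flat local homomorphisms (localness coming from $p\cap R=q\cap R=n$). Applying $\dd\in\Lambda$ to each of them yields $\dd(A_p)=\dd(R_n)+\dd(A_p/nA_p)$ and $\dd(B_q)=\dd(R_n)+\dd(B_q/nB_q)$.

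Substituting these two identities into the displayed formula gives
$$\dd((A\otimes_RB)_P)\;=\;\dd(R_n)+\dd\Big(\frac{A_p}{nA_p}\Big)+\dd\Big(\frac{B_q}{nB_q}\Big) \ ,$$
which is a sum of three nonnegative integers. Hence $(A\otimes_RB)_P$ is almost complete intersection, that is $\dd((A\otimes_RB)_P)=1$, if and only if exactly one of these three summands equals $1$ while the other two vanish.

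It then remains to translate the three resulting possibilities back into the assertions of the statement, which is the only point that requires care. Using the relations $\dd(A_p)=\dd(R_n)+\dd(A_p/nA_p)$ and $\dd(B_q)=\dd(R_n)+\dd(B_q/nB_q)$ once more, together with the conventions that complete intersection means defect $0$ and almost complete intersection means defect $1$, I would argue as follows. If $\dd(R_n)=1$ and both quotient defects vanish, then $\dd(A_p)=\dd(B_q)=1$, so $R_n$, $A_p$ and $B_q$ are all almost complete intersection, which is (1). If $\dd(R_n)=0$ and $\dd(A_p/nA_p)=1$ (the third summand zero), then $R_n$ and $B_q$ are complete intersection while $A_p$ is almost complete intersection, which is (3); the symmetric choice $\dd(B_q/nB_q)=1$ gives (2). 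Conversely, each of (1), (2), (3) forces $\dd(R_n)+\dd(A_p/nA_p)+\dd(B_q/nB_q)=1$, by reading the same relations backwards. The main (and essentially only) obstacle is this bookkeeping: faithfully matching the numerical split of the defect with the absolute conditions on $R_n$, $A_p$ and $B_q$ appearing in (1)--(3); no input beyond Theorem \ref{6.1} and Proposition \ref{2.2}(5) is needed.
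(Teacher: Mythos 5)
Your proposal is correct and rests on the same two ingredients as the paper's proof, namely Theorem \ref{6.1} and the additivity of $\dd$ along flat local homomorphisms from Proposition \ref{2.2}(5); the only difference is organizational, since you rewrite the defect as the three-term sum $\dd(R_n)+\dd(A_p/nA_p)+\dd(B_q/nB_q)$ and enumerate which summand equals $1$, whereas the paper splits into cases according to whether $R_n$ is complete intersection or almost complete intersection and invokes Corollary \ref{6.5} in the first case. Your bookkeeping in both directions checks out, so this is essentially the paper's argument in a slightly cleaner packaging.
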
 

\begin{proof} First, as by Theorem \ref{6.1}, $\dd((A\otimes_RB)_P)=\dd\big(\displaystyle {\frac {A_p}{nA_p}}\big)+\dd(B_q)=\dd(A_p)+
\dd\big(\displaystyle {\frac {B_q}{nB_q}}\big)$, note that if $(A\otimes_RB)_P$ is almost complete intersection,
then $\dd(R_n)\leq 1$, $\dd(A_p)\leq 1$ and $\dd(B_q)\leq 1$. 
If $R_n$ is complete intersection, then, by Corollary \ref{6.5}, $(A\otimes_RB)_P$ is almost complete intersection if and only if
either $A_p$ is almost complete intersection and $B_q$ is complete intersection or $A_p$ is complete intersection and $B_q$ is almost
complete intersection, as desired. Now, assume that $R_n$ is almost complete intersection. As $\dd(A_p)=\dd(R_n)+
\dd\big(\displaystyle {\frac {A_p}{nA_p}}\big)\geq 1$, if $\dd(A_p)\leq 1$, then $\dd(A_p)=1$, that is, $A_p$ is almost complete intersection. Similarly, if
$\dd(B_q)\leq 1$, then $B_q$ is almost complete intersection. It follows that $(A\otimes_{R}B)_P$ is almost complete intersection 
if and only if $A_p$ and $B_q$ are so. This completes the proof of the corollary.

\end{proof}

\section{Regular rings}

We are mainly concerned in this section with measuring the defect of
regularity of the tensor product of algebras over a ring.

For a local ring $S$, we denote by
$\codim S:= \embdim S-\dim S$ the codimension of $S$, so that $S$ is
regular if and only if $\codim S=0$. Thereby, from the very definition of complete intersection defect of $S$,
note that
$$\varepsilon_2(S)=\dd(S)+\codim(S)$$ where $\varepsilon_2(S)=\dim_lH_2(S,l,l)$.
Therefore $\varepsilon_2(S)$, as well as $\codim(S)$, is a measure of the defect of regularity of $S$ (it also follows from Property (x)).

\begin{lemma}\label{7.1}
Let $A$ and $B$ be two $R$-algebras. Let $P$ be a prime ideal
of $A\otimes_RB$ with $n:=P\cap R$, $p:=P\cap A$ and $q:=P\cap B$.
Assume that $A_p$ is a flat $R_n$-module.
Then, the following statements are equivalent:

(1) $\varepsilon_2((A\otimes_RB)_{P})\;=\; \varepsilon_2(A_p) + \varepsilon_2(B_q)- \varepsilon_2 (R_n) $;

(2) $\codim((A\otimes_RB)_{P})\;=\; \codim(A_p) + \codim(B_q)
-\codim(R_n) $.
\end{lemma}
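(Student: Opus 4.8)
The plan is to derive both statements from the single additivity formula for the complete intersection defect already proved in Theorem \ref{6.1}, using the identity $\varepsilon_2(S)=\dd(S)+\codim(S)$ recorded at the opening of this section, which is valid for every Noetherian local ring $S$.

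First I would record this identity for each of the four local rings at hand, namely $(A\otimes_RB)_P$, $A_p$, $B_q$ and $R_n$, obtaining
$$\varepsilon_2((A\otimes_RB)_P)\;=\;\dd((A\otimes_RB)_P)+\codim((A\otimes_RB)_P)$$
together with the analogous equalities $\varepsilon_2(A_p)=\dd(A_p)+\codim(A_p)$, $\varepsilon_2(B_q)=\dd(B_q)+\codim(B_q)$ and $\varepsilon_2(R_n)=\dd(R_n)+\codim(R_n)$. All the quantities appearing are finite: $\varepsilon_2$ is finite by Property (ix), and $\dd$ and $\codim$ are finite on Noetherian local rings, so every subtraction below is legitimate.

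Next, since $A_p$ is flat over $R_n$, Theorem \ref{6.1} applies and gives
$$\dd((A\otimes_RB)_P)\;=\;\dd(A_p)+\dd(B_q)-\dd(R_n) \ .$$
Substituting the four $\varepsilon_2$-identities into statement (1) and then invoking this equation, the contribution $\dd((A\otimes_RB)_P)$ is cancelled by the three terms $\dd(A_p)+\dd(B_q)-\dd(R_n)$, and statement (1) collapses to precisely statement (2); reading the same chain of equalities in reverse shows that (2) implies (1), which yields the claimed equivalence.

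I do not expect a genuine obstacle here, since all the mathematical content sits in Theorem \ref{6.1}: once $\dd$ is known to be additive, the passage between the $\varepsilon_2$-version and the $\codim$-version is a purely formal substitution. The only step deserving a word of care is the finiteness of the invariants, which guarantees that the cancellations are valid manipulations in $\mathbb{N}$ rather than merely formal; this is immediate from Property (ix) and the standard finiteness of $\codim$ and $\dd$ on Noetherian local rings.
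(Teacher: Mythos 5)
Your proposal is correct and coincides with the paper's own argument: both deduce the equivalence from the additivity of $\dd$ given by Theorem \ref{6.1} together with the identity $\varepsilon_2(S)=\dd(S)+\codim(S)$, so that the difference of the two sides of (1) equals the difference of the two sides of (2). Your added remark on finiteness of the invariants is a harmless (and correct) extra precaution.
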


\begin{proof} It suffices to note that, by Theorem \ref{6.1},
$$\dd((A\otimes_RB)_{P})\;=\; \dd(A_p)+ \dd(B_q)-\dd(R_n)$$ and
thus
$$\begin{array}{lll}\varepsilon_2((A\otimes_RB)_P)-\varepsilon_2(A_p)-\varepsilon_2(B_q)+\varepsilon_2(R_n)&=&
\codim((A\otimes_RB)_P)-\codim(A_p)\\
&& \quad - \; \codim(B_q)+\codim(R_n) \ .\end{array}$$
\end{proof}

Recently, in \cite[Theorem
1]{M}, it is proved the following theorem on
regularity of tensor products: \emph{Let $R$ be a Noetherian ring and
let $A$ and $B$ be two Noetherian $R$-algebras such that
$A\otimes_RB$ is a Noetherian ring. Assume that for each maximal
ideal $P$ of $A\otimes_RB$ with $n=P\cap R$, $q=P\cap A$ and
$q=P\cap B$ at least one of the three local $R_n$-algebras $A_p$,
$B_q$ or $k_{A\otimes_RB}(P)$ is formally smooth for the topology of
its maximal ideal. If $A$ and $B$ are regular rings, then so is
$A\otimes_RB$}. Our next results generalize this
theorem by measuring the defect of regularity of these tensor
products under the formal smoothness of one the components over the
base ring. 

We begin by dealing with the codimension of rings issued from formally smooth homomorphisms.

\begin{thm}\label{7.2}
Let $A$ and $B$ be two $R$-algebras. Let $P$ be a prime ideal of
$A\otimes_RB$ with $n:=P\cap R$, $p:=P\cap A$ and $q:=P\cap B$. If
$R_n\longrightarrow A_p$ is formally smooth, then
$$\codim((A\otimes_RB)_{P})\;=\; \codim(A_p) + \codim(B_q) -\codim(R_n) \ .$$
\end{thm}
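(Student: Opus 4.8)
The plan is to reduce the statement to an identity in Andr\'e--Quillen homology and then quote Lemma \ref{7.1}. Since a formally smooth local homomorphism of Noetherian local rings is automatically flat, the hypothesis guarantees that $A_p$ is flat over $R_n$, so Lemma \ref{7.1} applies and it suffices to prove the equivalent $\varepsilon_2$-identity
$$\varepsilon_2((A\otimes_RB)_P)\;=\;\varepsilon_2(A_p)+\varepsilon_2(B_q)-\varepsilon_2(R_n),$$
where $\varepsilon_2(S)=\dim_l H_2(S,l,l)$ for a local ring $(S,\cdot,l)$. Exactly as in the proof of Theorem \ref{2.3}, using $(A\otimes_RB)_P\cong(A_p\otimes_{R_n}B_q)_{P(A_p\otimes_{R_n}B_q)}$, I would first reduce to the case where $R,A,B$ are local with maximal ideals $n,p,q$, the map $R\to A$ is formally smooth (hence flat), and $P$ lies over $p$, $q$ and $n$. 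I then write $T:=(A\otimes_RB)_P$ and let $L$ be its residue field, which is a common field extension of the residue fields $k_A,k_B,k_R$.

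The computational heart is to evaluate $H_2(T,L,L)$. First, localization (Property (viii)) together with the Jacobi--Zariski sequence (Property (v)) for $A\otimes_RB\to T\to L$ gives $H_n(T,L,L)\cong H_n(A\otimes_RB,L,L)$ for all $n$, since the localization map is formally \'etale and contributes no relative homology. Because $A$ is flat over $R$ we have $\Tor^R_n(A,B)=0$ for all $n>0$, so I may invoke the tensor product exact sequence (Property (vi)) with $D=M=L$ as a long exact sequence valid in every degree:
$$\cdots\to H_2(R,L,L)\xrightarrow{f} H_2(A,L,L)\oplus H_2(B,L,L)\xrightarrow{g} H_2(A\otimes_RB,L,L)\xrightarrow{h} H_1(R,L,L)\xrightarrow{k} H_1(A,L,L)\oplus H_1(B,L,L)\to\cdots$$
where $f$ and $k$ are the maps induced functorially by the structure homomorphisms $R\to A$ and $R\to B$.

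The key input is formal smoothness of $R\to A$: Property (x) gives $H_1(R,A,l)=0$, and standard Andr\'e--Quillen theory upgrades this to $H_n(R,A,L)=0$ for every $n\ge1$ (formal smoothness makes the relative homology vanish in all positive degrees, for any coefficient module). Feeding this into the Jacobi--Zariski sequence for $R\to A\to L$ shows that the functorial maps $\alpha_n\colon H_n(R,L,L)\to H_n(A,L,L)$ are isomorphisms for $n\ge2$ and injective for $n=1$. Since the first component of $f$ is $\alpha_2$ (an isomorphism) and the first component of $k$ is $\alpha_1$ (injective), both $f$ and $k$ are injective; hence $h=0$, the map $g$ is surjective, and $\ker g=\im f\cong H_2(R,L,L)$. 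This produces a short exact sequence of finite-dimensional $L$-vector spaces (finiteness by Property (ix))
$$0\to H_2(R,L,L)\to H_2(A,L,L)\oplus H_2(B,L,L)\to H_2(A\otimes_RB,L,L)\to0,$$
so that $\dim_L H_2(A\otimes_RB,L,L)=\dim_L H_2(A,L,L)+\dim_L H_2(B,L,L)-\dim_L H_2(R,L,L)$.

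Finally I would replace the coefficient field $L$ by the individual residue fields. For $S\in\{R,A,B\}$ with residue field $k_S$, the Jacobi--Zariski sequence for $S\to k_S\to L$ together with $H_n(k_S,L,L)=0$ for $n\ge2$ (Property (vii)) gives $H_2(S,L,L)\cong H_2(S,k_S,L)$, while flat base change (Property (iv)) gives $H_2(S,k_S,L)\cong H_2(S,k_S,k_S)\otimes_{k_S}L$; hence $\dim_L H_2(S,L,L)=\dim_{k_S}H_2(S,k_S,k_S)=\varepsilon_2(S)$. Combining this with the displayed dimension count and with $H_2(T,L,L)\cong H_2(A\otimes_RB,L,L)$ proves the $\varepsilon_2$-identity, after which Lemma \ref{7.1} yields the codimension formula. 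The step I expect to be the main obstacle is the third one: justifying the full vanishing $H_n(R,A,L)=0$ for $n\ge1$ from formal smoothness and reading off correctly that $\alpha_2$ is an isomorphism and $\alpha_1$ is injective; once those maps are under control, the rest is bookkeeping across the three exact sequences.
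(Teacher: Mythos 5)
Your overall strategy coincides with the paper's: reduce to the $\varepsilon_2$-identity via Lemma \ref{7.1} (after noting that formal smoothness forces flatness), apply the tensor-product exact sequence of Property (vi) with coefficients in the residue field $E$ of $(A\otimes_RB)_P$, show that the two maps emanating from $H_2(R_n,E,E)$ and $H_1(R_n,E,E)$ are injective so as to extract a short exact sequence in degree $2$, and then translate $\dim_E H_2(-,E,E)$ back into $\varepsilon_2$ of each factor using Properties (vii), (iv) and (viii). All of that bookkeeping is correct.

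The one step that is not adequately justified is exactly the one you flag as the likely obstacle: the assertion that Property (x), i.e.\ $H_1(R_n,A_p,k_A(p))=0$, ``upgrades'' to $H_n(R_n,A_p,L)=0$ for all $n\ge 1$. This is not a routine consequence of the listed properties. The case $n=1$ with enlarged coefficients is fine (Property (iv) with the flat $k_A(p)$-module $L$), and that is all you need for injectivity of $\alpha_1=k$. But the case $n=2$, which is what you use to make $\alpha_2=f$ injective, is Andr\'e's theorem on localization of formal smoothness --- a genuinely deep result that is deliberately absent from the paper's list of Properties (i)--(xii). As stated, your proof therefore rests on an unproved (though true) theorem. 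The repair is cheap and is what the paper does: you do not need $\alpha_2$ to be an isomorphism, only injective, and injectivity of $H_2(R_n,E,E)\longrightarrow H_2(A_p,E,E)$ already follows from flatness of $R_n\longrightarrow A_p$ alone via Property (xi) (Avramov's injectivity result), with no appeal to formal smoothness in degree $2$. With that substitution your argument becomes a complete proof identical in substance to the paper's.
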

\begin{proof} 
Assume that $R_n\longrightarrow A_p$ is formally smooth. In light of Lemma
\ref{7.1}, it suffices to prove that
$\varepsilon_2((A\otimes_RB)_{P})\;=\; \varepsilon_2(A_p) +
\varepsilon_2(B_q) -\varepsilon_2(R_n) $. Observe that, by
\cite[Premi\`ere Partie, $0_{\text{IV}}$.19.7.1]{EGA4}, $A_p$ is a flat $R_n$-module.
Now, using Property (vi), consider the exact sequence
(where $E$ is the residue field of $(A\otimes_RB)_P$)
\begin{align*}
&H_{2}(R_{n},E,E)   \; \overset{\alpha_2}{\longrightarrow} \;    H_{2}(A_{p},E,E)\oplus H_{2}(B_{q},E,E)   \; \longrightarrow \;  H_{2}(A_{p}\otimes_{R_{n}}B_{q},E,E) \;\longrightarrow \\
\longrightarrow \; & H_{1}(R_{n},E,E)
\;\overset{\alpha_1}{\longrightarrow}\;  H_{1}(A_{p},E,E)\oplus
H_{1}(B_{q},E,E) \ .
\end{align*}
Let us show that $\alpha_1$ and $\alpha_2$ are injective. Note that
$\alpha_2$ is injective by Property (xi).
Also, Property (v) yields the following exact sequence
$$H_1(R_n,A_p,E)=0 \;\overset{\beta}{\longrightarrow}\; H_1(R_n,E,E) \;\longrightarrow\; H_1(A_p,E,E)$$
by Property (x) as $R_n\longrightarrow A_p$ is formally smooth. Therefore
$\alpha_1$ is injective. Hence the following sequence is exact
\begin{align*}
&0{\longrightarrow} \;H_2(R_n,E,E){\longrightarrow} \;
H_{2}(A_{p},E,E)\oplus H_{2}(B_{q},E,E) \; \longrightarrow \;
H_{2}(A_{p}\otimes_{R_{n}}B_{q},E,E) \longrightarrow 0 \
.\end{align*} As $H_2(A_p\otimes_{R_n}B_q,E,E)\cong
H_2((A\otimes_RB)_P,E,E)$ by Property (viii), and $\dim_EH_2(A_p,E,E)=\varepsilon_2(A_p)$
by Property (vii) (and similarly for $R_n$ and $B_q$), it follows that
$$\varepsilon_2 ((A\otimes_RB)_P)\;=\;\varepsilon_2 (A_p)+\varepsilon_2 (B_q)-\varepsilon_2 (R_n)$$ completing the proof of the theorem.
\end{proof}

\begin{corollary}\label{7.3} Let $(A,m,k)\longrightarrow (B,n,l)$ be a formally smooth homomorphism of local rings. Then
 
(1) $\varepsilon_2(B)=\varepsilon_2(A)+\varepsilon_2\big(\displaystyle {\frac B{mB}}\big)$.

(2) $\codim(B)=\codim(A)+\codim\big(\displaystyle {\frac B{mB}}\big)$.
\end{corollary}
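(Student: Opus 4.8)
The plan is to obtain both identities as a single specialization of Theorem \ref{7.2}, and then to pass from the codimension identity to the $\varepsilon_2$ identity by means of the complete intersection defect. First I would set up the specialization: regard $B$ and $k:=A/m$ as algebras over the base ring $A$, and observe that $B\otimes_A k\cong B/mB$ is a nonzero local ring (its maximal ideal being the image of $n$, since $mB\subseteq n$), so that it is automatically Noetherian and coincides with its own localization at that maximal ideal. Let $P$ denote this maximal ideal of $B\otimes_A k$. Its contraction to $A$ is $m$, to $B$ is $n$, and to $k$ is $(0)$; hence, in the notation of Theorem \ref{7.2} applied with base ring $A$, first algebra $B$ and second algebra $k$, one has $R_n=A$, $A_p=B$, $B_q=k$ and $(A\otimes_R B)_P=B/mB$. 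The formal-smoothness hypothesis of Theorem \ref{7.2} becomes exactly the assumption that $A\longrightarrow B$ is formally smooth, which holds.

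With these identifications Theorem \ref{7.2} reads
$$\codim\Big(\frac{B}{mB}\Big)\;=\;\codim(B)+\codim(k)-\codim(A).$$
Since $k$ is a field we have $\codim(k)=0$, and rearranging yields assertion (2).

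To deduce (1) from (2), I would invoke the complete intersection defect. A formally smooth local homomorphism is flat (as recalled in the proof of Theorem \ref{7.2}), so by Proposition \ref{2.2} the defect $\dd$ belongs to $\Lambda$, whence $\dd(B)=\dd(A)+\dd\big(\frac{B}{mB}\big)$. Combining this additivity with the identity $\varepsilon_2(S)=\dd(S)+\codim(S)$ recorded at the start of this section and with (2), the source, target and fiber contributions add termwise to give
$$\varepsilon_2(B)\;=\;\varepsilon_2(A)+\varepsilon_2\Big(\frac{B}{mB}\Big),$$
which is (1). (Equivalently, one could read off the $\varepsilon_2$-additivity directly from the proof of Theorem \ref{7.2}, which establishes it en route to the codimension formula.)

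The routine but essential step to verify with care is the bookkeeping in the specialization: that $B\otimes_A k$ is precisely the local ring $B/mB$ and that the three contractions of $P$ produce the localizations $A$, $B$ and $k$. This is the only place where anything could go wrong, since once these identifications are confirmed the two formulas are a direct substitution into Theorem \ref{7.2} combined with the additivity of $\dd$.
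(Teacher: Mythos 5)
Your proposal is correct and follows essentially the same route as the paper: both obtain (2) by specializing Theorem \ref{7.2} to the tensor product $B\otimes_A k\cong B/mB$ (with the contractions of its maximal ideal giving $A$, $B$ and $k$), and both deduce (1) from (2) via the identity $\varepsilon_2=\dd+\codim$ together with the additivity of the complete intersection defect along the flat local map $A\to B$ (the paper packages this last step as Lemma \ref{7.1}, you as Proposition \ref{2.2}(5), which is the same fact).
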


\begin{proof} 
Part (2) follows from Theorem \ref{7.2} and Lemma \ref{7.1}, since
\begin{align*}
\codim\big(\frac{B}{mB}\big) =
\codim(B\otimes_Ak) = \codim B -\codim A + \codim k = \codim B - \codim A
\end{align*}
and (1) follows from (2) as above.
\end{proof}

The following corollaries discuss the regularity property as well as the embedding
dimension of the tensor products of algebras over 
a ring under formal smoothness hypothesis.

\begin{corollary}
Let $A$ and $B$ be two $R$-algebras. Let $P$ be a prime ideal of
$A\otimes_RB$ with $n:=P\cap R$, $p:=P\cap A$ and $q:=P\cap B$. Assume that
$R_n\longrightarrow A_p$ is formally smooth. Consider the following assertions:

(1) $A_p$ and $B_q$ are regular.

(2) $(A\otimes_RB)_P$ is regular;

(3) $\displaystyle {\frac {A_p}{nA_p}}$ and $B_q$ are regular;

(4) $B_q$ is regular and $\codim(A_p)=\codim(R_n)$.

Then (1) $\Rightarrow$ (2) $\Leftrightarrow$ (3) $\Leftrightarrow$ (4).
\end{corollary}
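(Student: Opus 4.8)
The plan is to reduce everything to two codimension identities already established and then exploit the nonnegativity of the codimension together with the criterion that a Noetherian local ring $S$ is regular precisely when $\codim(S)=0$. First I would record that since $R_n\longrightarrow A_p$ is formally smooth it is in particular flat (as noted in the proof of Theorem \ref{7.2}), so both Theorem \ref{7.2} and Corollary \ref{7.3} apply to the situation at hand. Theorem \ref{7.2} gives
$$\codim((A\otimes_RB)_P)\;=\;\codim(A_p)+\codim(B_q)-\codim(R_n),$$
while Corollary \ref{7.3}(2), applied to the formally smooth map $R_n\longrightarrow A_p$, gives $\codim(A_p)=\codim(R_n)+\codim\big(\frac{A_p}{nA_p}\big)$. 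Substituting the second identity into the first, the terms $\codim(R_n)$ cancel and one obtains the master formula
$$\codim((A\otimes_RB)_P)\;=\;\codim\Big(\frac{A_p}{nA_p}\Big)+\codim(B_q).$$

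With this formula in hand the equivalence (2) $\Leftrightarrow$ (3) is immediate: every summand on the right is a codimension, hence a nonnegative integer, so the left-hand side vanishes if and only if both $\codim(A_p/nA_p)=0$ and $\codim(B_q)=0$, that is, if and only if $A_p/nA_p$ and $B_q$ are regular. For (3) $\Leftrightarrow$ (4) I would again use the Corollary \ref{7.3} identity in the rewritten form $\codim(A_p/nA_p)=\codim(A_p)-\codim(R_n)$; thus $A_p/nA_p$ is regular (i.e.\ $\codim(A_p/nA_p)=0$) exactly when $\codim(A_p)=\codim(R_n)$, and since both (3) and (4) additionally require $B_q$ to be regular, the two assertions coincide.

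Finally, for (1) $\Rightarrow$ (2), suppose $A_p$ and $B_q$ are regular. Then $\codim(A_p)=0$, and because $\codim(A_p)=\codim(R_n)+\codim(A_p/nA_p)$ expresses it as a sum of two nonnegative integers, both must vanish; in particular $\codim(A_p/nA_p)=0$. Combined with $\codim(B_q)=0$, the master formula yields $\codim((A\otimes_RB)_P)=0$, so $(A\otimes_RB)_P$ is regular. There is no genuine obstacle here, the content being carried entirely by Theorem \ref{7.2} and Corollary \ref{7.3}; the only point demanding care is the repeated use of nonnegativity of the codimension, which is what converts the additive formula into the desired equivalences. It is also worth explaining why (1) is merely sufficient and not necessary for (2): assertions (2)--(4) force regularity of the fiber $A_p/nA_p$ but impose nothing that would make $R_n$, and hence $A_p$, regular, so the reverse implication genuinely fails in general.
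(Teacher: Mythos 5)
Your proof is correct and follows exactly the paper's (much terser) argument: combine Theorem \ref{7.2} with the identity $\codim(A_p)=\codim(R_n)+\codim\bigl(\frac{A_p}{nA_p}\bigr)$ from Corollary \ref{7.3}, then use nonnegativity of the codimension. The extra detail you supply, including the remark on why (2) $\Rightarrow$ (1) fails, is consistent with the paper but adds nothing methodologically new.
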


\begin{proof} It is direct from Theorem \ref{7.2} as by Corollary \ref{7.3}, $\codim(A_p)=\codim(R_n)+\codim\big(\displaystyle {\frac {A_p}{nA_p}}\big)$.
\end{proof}

\begin{corollary}
Let $A$ and $B$ be two $R$-algebras. Let $P$ be a prime ideal of
$A\otimes_RB$ with $n:=P\cap R$, $p:=P\cap A$ and $q:=P\cap B$. If
$R_n\longrightarrow A_p$ is formally smooth, then
\begin{align*}
\embdim((A\otimes_RB)_P)\;&=\;\embdim (A_p)+\embdim (B_q) -\embdim (R_{n}) \\
 &\qquad+ \; \dim \big((k_A(p)\otimes_{k_R(n)} k_B(q))_{P(k_A(p)\otimes_{k_R(n)} k_B(q))}\big) \ .
\end{align*}
\end{corollary}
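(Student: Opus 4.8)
The plan is to combine the additivity of codimension from Theorem \ref{7.2} with the additivity of Krull dimension from Corollary \ref{2.6}, exploiting the identity $\embdim S = \codim S + \dim S$, which holds for every Noetherian local ring $S$ by the very definition $\codim S := \embdim S - \dim S$ recalled at the start of this section.

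First I would observe that the hypothesis that $R_n\longrightarrow A_p$ is formally smooth guarantees, via \cite[Premi\`ere Partie, $0_{\text{IV}}$.19.7.1]{EGA4}, that $A_p$ is a flat $R_n$-module; this is exactly the reduction already carried out at the beginning of the proof of Theorem \ref{7.2}. Consequently, for the prime $P$ both decomposition formulas are simultaneously available: Corollary \ref{2.6}(1) applies because $A_p$ is $R_n$-flat, while Theorem \ref{7.2} applies because $R_n\longrightarrow A_p$ is formally smooth.

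Next I would write $\embdim((A\otimes_RB)_P) = \codim((A\otimes_RB)_P) + \dim((A\otimes_RB)_P)$ and substitute the two formulas. Theorem \ref{7.2} contributes $\codim(A_p)+\codim(B_q)-\codim(R_n)$, and Corollary \ref{2.6}(1) contributes $\dim(A_p)+\dim(B_q)-\dim(R_n)$ together with the fiber term $\dim\big((k_A(p)\otimes_{k_R(n)}k_B(q))_{P(k_A(p)\otimes_{k_R(n)}k_B(q))}\big)$. Regrouping the codimension and dimension contributions of each factor through $\codim(A_p)+\dim(A_p)=\embdim(A_p)$, and likewise for $B_q$ and $R_n$, yields precisely the claimed identity, with the single surviving fiber term coming entirely from the dimension formula.

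The argument is essentially bookkeeping, so there is no genuine obstacle beyond keeping track of where the fiber term lands: it enters only the dimension summand and never the codimension summand. The one point deserving a word of care is that the two hypotheses do not clash, namely flatness (needed for the dimension formula) against formal smoothness (needed for the codimension formula); they do not, since formal smoothness is the stronger condition and subsumes the required flatness.
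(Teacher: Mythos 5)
Your argument is correct and is exactly the paper's proof: the paper derives this corollary by combining Theorem \ref{7.2} (additivity of $\codim$) with Corollary \ref{2.6} (additivity of $\dim$ plus the fiber term) via $\embdim = \codim + \dim$, with flatness of $A_p$ over $R_n$ supplied by formal smoothness just as you note. The bookkeeping, including the placement of the fiber term in the dimension summand only, checks out.
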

\begin{proof}
It follows from Corollary \ref{2.6} and Theorem \ref{7.2}.
\end{proof}

Next, we announce our second main theorem of this section.

\begin{thm}
Let $A$ and $B$ be $R$-algebras. Let $P$ be a prime ideal of
$A\otimes_RB$ with $p:=P\cap A$ and $q:=P\cap B$. Assume that
$R_n\longrightarrow k_A(p)$ is formally smooth, that is, $R_n$ is a field and
$k_A(p)$ is separable over $R_n$. Then

(1) $\codim((A\otimes_RB)_{P})\;=\; \codim(A_p) + \codim(B_q).$

(2) $\embdim((A\otimes_RB)_{P})\;=\; \embdim(A_p) +
\embdim(B_q)$

\hspace{4cm} $+\;\dim\Big
((k_A(p)\otimes_{R_n}k_B(q))_{P(k_A(p)\otimes_{R_n}k_B(q))}\Big)$ .
\end{thm}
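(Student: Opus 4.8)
The plan is to follow the strategy of Theorem \ref{7.2}, replacing the formal smoothness of $R_n\longrightarrow A_p$ by that of $R_n\longrightarrow k_A(p)$. By the reformulation in the statement I may work under the hypotheses that $R_n$ is a field, say $k:=R_n=k_R(n)$, and that $k_A(p)$ is separable over $k$. In particular $A_p$ and $B_q$ are flat over $R_n$, so Lemma \ref{7.1} applies; since a field is regular we have $\varepsilon_2(R_n)=0=\codim(R_n)$, and hence assertion (1) is equivalent to the identity
$$\varepsilon_2((A\otimes_RB)_P)\;=\;\varepsilon_2(A_p)+\varepsilon_2(B_q) \ ,$$
which is what I would establish first.

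To prove it I would exploit that $k$ is a field, so that $\Tor_i^{k}(A_p,B_q)=0$ for all $i>0$, and invoke Property (vi) to obtain, with $E$ the residue field of $(A\otimes_RB)_P\cong(A_p\otimes_{k}B_q)_{P(A_p\otimes_kB_q)}$, the exact sequence
\begin{align*}
H_2(k,E,E)\;\longrightarrow\;& H_2(A_p,E,E)\oplus H_2(B_q,E,E)\;\longrightarrow\;H_2(A_p\otimes_{k}B_q,E,E)\;\longrightarrow\;\\
&H_1(k,E,E)\;\overset{\alpha_1}{\longrightarrow}\;H_1(A_p,E,E)\oplus H_1(B_q,E,E) \ .
\end{align*}
Here $H_2(k,E,E)=0$ by Property (vii), so the left arrow is injective; combined with the identifications $\dim_EH_2(A_p,E,E)=\varepsilon_2(A_p)$, and similarly for $B_q$ and for $A_p\otimes_kB_q$, obtained from Properties (iv), (vii) and (viii) exactly as in Theorem \ref{7.2}, the displayed identity will follow once $\alpha_1$ is shown to be injective (for then the map $H_2(A_p\otimes_kB_q,E,E)\to H_1(k,E,E)$ vanishes and the left arrow becomes an isomorphism).

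The crux, and the step I expect to be the main obstacle, is the injectivity of $\alpha_1$; this is precisely where separability of $k_A(p)$ enters. I would examine only the first component $H_1(k,E,E)\longrightarrow H_1(A_p,E,E)$. Applying the Jacobi--Zariski sequence (Property (v)) to the tower $k\longrightarrow k_A(p)\longrightarrow E$, the vanishing $H_1(k,k_A(p),E)=0$ forced by separability (Property (vii)) makes the natural map $H_1(k,E,E)\longrightarrow H_1(k_A(p),E,E)$ injective. Since $A_p\longrightarrow E$ factors as $k\longrightarrow A_p\longrightarrow k_A(p)\longrightarrow E$, functoriality in the first variable factors this injective map through $H_1(A_p,E,E)$, whence its first factor $H_1(k,E,E)\longrightarrow H_1(A_p,E,E)$ is injective. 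Thus $\alpha_1$ is injective, $\ker\alpha_1=0$, and the $\varepsilon_2$-identity holds, giving assertion (1) via Lemma \ref{7.1}.

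Finally, I would deduce (2) formally from (1): writing $\embdim=\codim+\dim$, using Corollary \ref{2.6}(1) together with $\dim(R_n)=0$, and setting $D:=(k_A(p)\otimes_{R_n}k_B(q))_{P(k_A(p)\otimes_{R_n}k_B(q))}$, one obtains
$$\embdim((A\otimes_RB)_P)=\big(\codim(A_p)+\codim(B_q)\big)+\big(\dim(A_p)+\dim(B_q)+\dim D\big) \ ,$$
and regrouping the codimension and dimension of each factor yields $\embdim((A\otimes_RB)_P)=\embdim(A_p)+\embdim(B_q)+\dim D$, as claimed.
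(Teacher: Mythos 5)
Your proof is correct and follows essentially the same route as the paper: rerun the argument of Theorem \ref{7.2}, use the separability-induced vanishing $H_1(R_n,k_A(p),E)=0$ (Property (vii)) to obtain the injectivity of $\alpha_1$, and deduce (2) from (1) and Corollary \ref{2.6}. The only cosmetic difference is that the paper factors the connecting map $\beta\colon H_1(R_n,A_p,E)\to H_1(R_n,E,E)$ through $H_1(R_n,k_A(p),E)=0$, whereas you factor the injection $H_1(R_n,E,E)\to H_1(k_A(p),E,E)$ through $H_1(A_p,E,E)$; these are equivalent uses of the same vanishing.
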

\begin{proof}
The proof of (1) is the same as that of Theorem \ref{7.2}, taking in mind that the
homomorphism $\beta$ in that proof factorizes as
$$H_1(R_n,A_p,E) \;\longrightarrow\; H_1(R_n,k_A(p),E) \;\longrightarrow\; H_1(R_n,E,E)$$
and $H_1(R_n,k_A(p),E)=0$ by Property (vii). Then (2) follows from (1) and Corollary \ref{2.6}.
\end{proof}

\begin{corollary} Let $A$ and $B$ be $R$-algebras. Let $P$ be a prime ideal of
$A\otimes_RB$ with $p:=P\cap A$ and $q:=P\cap B$. Assume that
$R_n\longrightarrow k_A(p)$ is formally smooth (i.e., $R_n$ is a field and
$k_A(p)$ is separable over $R_n$). Then the following assertions are equivalent:

(1) $(A\otimes_RB)_P$ is regular;

(2) $A_p$ and $B_q$ are regular.\end{corollary}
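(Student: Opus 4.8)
The plan is to read off the corollary directly from part (1) of the theorem immediately preceding it, whose entire content is the additivity of codimension $\codim((A\otimes_RB)_P) = \codim(A_p) + \codim(B_q)$ under the standing formal smoothness hypothesis on $R_n \longrightarrow k_A(p)$. The one extra ingredient I need is the general fact recorded at the start of this section: for any Noetherian local ring $S$ the codimension $\codim(S) = \embdim(S) - \dim(S)$ is a nonnegative integer, and $S$ is regular exactly when $\codim(S) = 0$.

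First I would substitute the additivity formula, so that the regularity of $(A\otimes_RB)_P$ becomes the single equation $\codim(A_p) + \codim(B_q) = 0$. Since each summand lies in $\mathbb{N}$, a sum of two such quantities vanishes if and only if both vanish; this is the only arithmetic step and it is immediate. Translating back, $\codim(A_p) = 0$ says that $A_p$ is regular and $\codim(B_q) = 0$ says that $B_q$ is regular, which yields the equivalence (1) $\Leftrightarrow$ (2). Note that part (2) of the theorem, the embedding-dimension formula, is not needed here.

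I do not expect any genuine obstacle: all the real work has already been done in the theorem, and the corollary is a purely formal consequence of the nonnegativity of codimension together with the characterization of regularity as vanishing codimension. The only point worth flagging is that the formal smoothness of $R_n \longrightarrow k_A(p)$ forces $R_n$ to be a field, hence $\codim(R_n) = 0$ and no correction term $-\codim(R_n)$ survives in the additivity formula; this is precisely why the statement can be phrased purely in terms of $A_p$ and $B_q$ with no separate hypothesis on $R_n$.
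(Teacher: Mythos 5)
Your proof is correct and matches the paper's intent exactly: the paper states this corollary without proof as an immediate consequence of part (1) of the preceding theorem, and your argument — additivity of codimension plus the facts that $\codim$ is a nonnegative integer vanishing precisely on regular local rings — is the intended derivation. Your closing remark about $R_n$ being a field (so $\codim(R_n)=0$ and no correction term appears) is also consistent with how the theorem is stated.
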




\begin{thebibliography}{9000000}
\bibitem[An]{An} Andr\'e, M. \emph{Homologie des alg\'ebres commutatives}.
Grundlehren Math. Wiss., 206. Springer-Verlag, Berlin-New York, 1974.


\bibitem[Av]{Av} Avramov, L.L. \emph{Homology of local flat extensions and complete intersection defects}.
Math. Ann., 228(1) (1977), 27-37.

\bibitem[B]{B} Bass, H. \emph{On the ubiquity of Gorenstein rings}.
Math. Z., 82 (1963), 8-28.

\bibitem[BK1]{BK1} Bouchiba, S.; Kabbaj, S. \emph{Tensor products of Cohen-Macaulay rings: solution to a problem of Grothendieck}.
J. Algebra, 252(1) (2002), 65-73.

\bibitem[BK2]{BK2} Bouchiba, S.; Kabbaj, S. \emph{Regularity of tensor products of $k$-algebras}.
Math. Scand. 115(1) (2014), 5-19.

\bibitem[BH]{BH} Bruns,W.; Herzog, J. \emph{Cohen-Macaulay rings}.
Cambridge Stud. Adv. Math., 39. Cambridge University Press, Cambridge, 1993.

\bibitem[Bo]{Bo} Bourbaki, N. \emph{Algebra I}.
Chapters 1-3. Springer-Verlag, Berlin, 1998.


\bibitem[EGA I]{EGA1} Grothendieck, A.; Dieudonn\'e, J. \emph{\'Elements de g\'eom\'etrie alg\'ebrique, I}.
Grundlehren Math. Wiss., 166. Springer-Verlag, Berlin, 1971.

\bibitem[EGA IV]{EGA4} Grothendieck, A.; Dieudonn\'e, J. \emph{\'El\'ements de g\'eom\'etrie alg\'ebrique, IV}.
Inst. Hautes \'Etudes Sci. Publ. Math., 20 (1964), 24 (1965), 28 (1966), 32 (1967).

\bibitem[FR]{FR} Franco, L.;  Rodicio, A.G. \emph{On the vanishing of the second André-Quillen homology of a local homomorphism}.
J. Algebra, 155(1) (1993), 137-141.

\bibitem[FT]{FT} Foxby, H.B.; Thorup, A. \emph{Minimal injective resolutions under flat base change}.
Proc. Amer. Math. Soc. 67(1) (1977), 27-31.

\bibitem[K] {K} Kunz, E. \emph{Almost complete intersections are not Gorenstein rings}. J. Algebra 28 (1974), 111-115.

\bibitem[KK]{KK} Kiehl, R.; Kunz, E. \emph{Vollst\"{a}ndige durchschnitte und p-basen}.
Arch. Math., 16 (1965), 348-362.

\bibitem[M]{M} Majadas, J. \emph{On tensor products of complete intersections}.
Bull. Lond. Math. Soc., 45(6) (2013), 1281-1284.

\bibitem[MR]{MR} Majadas, J.;  Rodicio, A.G. \emph{Smoothness, regularity and complete intersection}.
London Math. Soc. Lecture Note Ser., 373. Cambridge University Press, Cambridge, 2010.

\bibitem[Mt]{Mt} Matsumura, H. \emph{Commutative ring Theory}.
Cambridge Stud. Adv. Math., 8. Cambridge University Press, Cambridge, 1989.

\bibitem[Mu]{Mu} Mukhamedov, V.G. \emph{The behaviour of the Krull dimension of a commutative algebra
under extension of the ground field}.
Russian Math. Surveys, 33(6) (1978), 247-248.


\bibitem[R]{R} Roberts, P. \emph{Rings of type 1 are Gorenstein}.
Bull. London Math. Soc., 15(1) (1983), 48-50.


\bibitem[Sh1]{Sh1} Sharp, R.Y. \emph{Simplifications in the theory of tensor products of field extensions}.
J. London Math. Soc. (2), 15(1) (1977), 48-50.

\bibitem[Sh2]{Sh2} Sharp, R.Y. \emph{The dimension of the tensor product of two field extensions}.
Bull. London Math. Soc., 9(1) (1977), 42-48.

\bibitem[TY]{TY} Tousi, M.; Yassemi, S. \emph{Tensor products of some special rings}.
J. Algebra, 268(2) (2003), 672-676.


\bibitem[WITO]{WITO} Watanabe, K.; Ishikawa, T.; Tachibana, S.; Otsuka, K. \emph{On tensor products of Gorenstein rings}.
J. Math. Kyoto Univ., 9(3) (1969), 413-423.


\end{thebibliography}
\end{document}